\documentclass[12pt, a4paper]{article}
\usepackage{amssymb}
\usepackage[all,arc]{xy}
\usepackage{mathrsfs}
\usepackage{amsfonts}
\usepackage{color}
\marginparwidth 0pt
\oddsidemargin 0pt
\evensidemargin 0pt
\topmargin -1.7 cm
\textheight 25.0 truecm
\textwidth 16.2 truecm %\oddsidemargin=0.1in

\usepackage{amsmath,amsthm,amsfonts,amssymb,amscd}

\allowdisplaybreaks[4]
\newtheorem{defn}{Definition}[section]
\newtheorem{them}[defn]{Theorem}
\newtheorem{lem}[defn]{Lemma}

\newtheorem{cor}[defn]{Corollary}

\newtheorem{prop}[defn]{Proposition}

\newtheorem{con}[defn]{Conjecture}

%\newcounter{rm}[defn]{Remark}
%\theoremstyle{remark}

\numberwithin{equation}{section}

\begin{document}
\title{Distance and distance signless Laplacian spread of connected   graphs
\footnote{L. You's   work was supported by the National Natural Science Foundation of China (Grant No. 11571123)
and the Guangdong Provincial Natural Science Foundation(Grant No. 2015A030313377),
and G. Yu's work was supported by the NSF of China (Grant No. 11271315).}}

 \author{Lihua You$^{a,}$\footnote{{\it{Email address:\;}}ylhua@scnu.edu.cn}
\qquad Liyong Ren$^{a,}$\footnote{{\it{Email address:\;}}275764430@qq.com. }
 \qquad Guanglong Yu$^{b,}$\footnote{{\it{Corresponding author:\;}}yglong01@163.com.}}
\vskip.2cm
\date{{\small
$^{a}$ School of Mathematical Sciences, South China Normal University, Guangzhou, 510631, P.R. China\\
$^{b,}$ Department of Mathematics, Yancheng Teachers University, Yancheng, 224002, Jiangsu, P.R. China\\
}}
\maketitle
\noindent {\bf Abstract }
For a connected graph $G$ on $n$ vertices, recall that the distance signless Laplacian matrix of $G$ is defined to be $\mathcal{Q}(G)=Tr(G)+\mathcal{D}(G)$, where $\mathcal{D}(G)$ is the distance matrix, $Tr(G)=diag(D_1, D_2, \ldots, D_n)$ and $D_{i}$ is the row sum of $\mathcal{D}(G)$ corresponding to vertex $v_{i}$.
Denote by $\rho^{\mathcal{D}}(G),$ $\rho_{min}^{\mathcal{D}}(G)$ the largest eigenvalue and the least eigenvalue of $\mathcal{D}(G)$, respectively. And denote by $q^{\mathcal{D}}(G)$, $q_{min}^{\mathcal{D}}(G)$ the largest eigenvalue and the least eigenvalue of $\mathcal{Q}(G)$, respectively. The distance spread of a graph $G$ is defined as $S_{\mathcal{D}}(G)=\rho^{\mathcal{D}}(G)- \rho_{min}^{\mathcal{D}}(G)$, and
the distance signless Laplacian  spread of a graph $G$ is defined as $S_{\mathcal{Q}}(G)=q^{\mathcal{D}}(G)-q_{min}^{\mathcal{D}}(G)$.
In this paper, we point out an error in the result of Theorem 2.4 in  ``Distance spectral spread of a graph" [G.L. Yu, et al,
Discrete Applied Mathematics. 160 (2012) 2474--2478] and rectify it.
As well, we obtain some lower bounds on ddistance signless Laplacian spread of a graph.

{\it \noindent {\bf Keywords:}}  Distance matrix; Distance signless Laplacian; Spectral spread

\section{Introduction}
\hskip 0.6cm
Throughout this article, we assume that $G$ is a simple, connected and undirected graph on $n$ vertices. Let $G=(V(G), E(G))$ be a graph with vertex set $V(G)=\{v_1,v_2,\ldots,v_n\}$ and edge set $E(G)$.
 We denote by $deg(v_i)$ (simply, $d_i$) the degree of vertex $v_i$, and for $u, v\in V$, we denote by $d_G(u,v)$ the distance between $u$ and $v$ in $G$.
Recall that the $distance$ $matrix$ is $\mathcal{D}(G)=(d_{ij})$ where $d_{ij}=d_G(v_i,v_j)$.
For any $v_{i}\in V(G)$, the $transmission$ of vertex $v_{i}$, denoted by $Tr_G(v_i)$ or $D_{i}$, is defined to be $\sum\limits_{v_{j}\in V(G), j\neq i}d_G(v_i, v_{j})$, which is equal to the row sum of $\mathcal{D}(G)$ corresponding to vertex $v_i$. Sometimes, $D_{i}$ is called the $distance$ $degree$.
Let $Tr(G)=diag(D_1, D_2, \ldots, D_n)$ be the diagonal matrix of vertex transmissions of $G$.
  The $distance$ $signless$ $Laplacian$ $matrix$ of $G$ is
   defined as
 $ \mathcal{Q}(G)=Tr(G)+\mathcal{D}(G)$ (see \cite{2013A}).

For  a nonnegative real symmetric matrix $M$, we denote by $P_{M}(\lambda)=|\lambda I- M |$ its the characteristic polynomial. Its largest eigenvalue is called the spectral radius of $M$. For a graph $G$, the $spectral$ $radius$ of $\mathcal{D}(G)$  and $\mathcal{Q}(G)$,  denoted by $\rho^{\mathcal{D}}(G)$ and
$q^{\mathcal{D}}(G)$, are also called the $distance$ $spectral$ $radius$  and  the $distance$ $signless$ $Laplacian$ $spectral$ $radius$, respectively. Denote by $\rho_{min}^{\mathcal{D}}(G)$ and $q_{min}^{\mathcal{D}}(G)$ the least eigenvalue of $\mathcal{D}(G)$ and the least eigenvalue of $\mathcal{Q}(G)$, respectively.  The $distance$ $spread$ of graph $G$ is defined as $S_{\mathcal{D}}(G)=\rho^{\mathcal{D}}(G)- \rho^{\mathcal{D}}_{min}(G)$, and the $distance$ $signless$ $Laplacian$ $spread$ of  graph $G$ is defined as
$S_{\mathcal{Q}}(G)=q^{\mathcal{D}}(G)- q^{\mathcal{D}}_{min}(G)$. Without ambiguity, $S_{\mathcal{D}}(G)$ and $S_{\mathcal{Q}}(G)$ are shortened as $S_{\mathcal{D}}$ and $S_{\mathcal{Q}}$ sometimes.

From \cite{1985, 1956}, we know that the spread of a matrix is a very interesting topic. As a result, in algebraic graph theory, the spread of some matrices of a graph also becomes interesting (see \cite{2001GHK}, \cite{2010LL}). Because the research of the eigenvalues of the distance matrix of a graph is of great significance for both algebraic graph theory and practical applications, the problem concerning the distance spectrum of a graph has been studied extensively recently (see \cite{1971}, \cite{1991}, \cite{1998}, \cite{2014A}). These cause the interests of the researchers on the problem about the distance spectral spread of a graph (\cite{2012}, \cite{2015}).
Motivated by these, in this paper, we proceed to consider the distance and distance signless Laplacian spread of a graph.

In Section 3,  we point out an error in the result of  Theorem 2.4 in  ``Distance spectral spread of a graph" [G.L. Yu, etc,
%H.L. Zhang, H.Q. Lin, Y.R. Wu, J.L. Shu,
Discrete Applied Mathematics. 160 (2012) 2474--2478] and rectify it. In Section 4, some lower bounds on distance signless Laplacian   spread of a graph are obtained.

\section{Some preliminaries}
\hskip 0.6cm In this section, we introduce some definitions, notations and working lemmas.

Let $I_p$ be the $p\times p$ identity matrix and $J_{p,q}$ be the $p\times q$ matrix in which every entry is $1$, or simply $J_p$ if $p=q$. For a matrix $M$, its spectrum $\sigma(M)$ is the multiset of its eigenvalues.

\begin{defn}\label{defn21}
Let $M$ be a real matrix of order $n$ described in the following block form
\begin{equation}\label{eq21}
M = \left(\begin{array}{ccc}
M_{11} & \cdots & M_{1t}\\
\vdots &  \ddots      &\vdots \\
 M_{t1}& \cdots & M_{tt}\\
\end{array}\right),
\end{equation}

\noindent where the diagonal blocks $M_{ii}$ are $n_i\times n_i$ matrices for any $i\in\{1,2,\ldots, t\}$ and $n=n_1+\ldots+n_t$.
For any $i,j\in\{1,2,\ldots, t\}$, let $b_{ij}$ denote the average row sum of $M_{ij}$, i.e. $b_{ij}$ is the sum of all entries in $M_{ij}$ divided by the number of rows. Then $B(M) = (b_{ij})$ (simply by $B$) is called the quotient matrix of $M$. If in addition for each pair $i, j$, $M_{ij}$ has constant row sum, then $B(M)$ is called the equitable quotient matrix of $M$.
\end{defn}

Consider two sequences of real numbers: $\lambda_{1}\geq \lambda_{2} \geq ... \geq\lambda_{n}$, and $\mu_{1}\geq \mu_{2}\geq ...\geq \mu_{m}$  with $m<n$. The second sequence is said to interlace the first one whenever
$\lambda_{i}\geq \mu_{i}\geq \lambda_{n-m+i}$ for $i=1,2,...,m$.
%The interlacing is called tight if there exists an integer $k \in [1,m]$
%such that $\lambda_{i}=\mu_{i}$ hold for $1\leq i \leq k$ and $\lambda_{n-m+i}= \mu_{i}$ hold for $k + 1 \leq i \leq m$.

%\begin{lem}{\rm(\cite{1995H})}
%Let $A$ be a symmetric matrix and  have the block form as (\ref{eq21}),
%$B$ be the quotient matrix of $A$. Then

%{\rm (1) } The eigenvalues of $B$ interlace the eigenvalues of $A$.

%{\rm (2) } If the interlacing is tight, then $B$ is the equitable matrix of $A$.
%\end{lem}

\begin{lem}{\rm(\cite{1995H})}\label{lem22}
Let $M$ be a symmetric matrix and  have the block form as (\ref{eq21}),
$B$ be the quotient matrix of $M$. Then the eigenvalues of $B$ interlace the eigenvalues of $M$.
\end{lem}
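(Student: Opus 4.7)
The plan is to realize the quotient matrix $B$ as a conjugate (via a similarity) of $C^{T}MC$ for some $n\times t$ matrix $C$ with orthonormal columns, and then to invoke the Cauchy (Poincar\'e) interlacing theorem for principal compressions of a symmetric matrix.

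For each $i\in\{1,2,\ldots,t\}$, let $s_{i}\in\mathbb{R}^{n}$ be the indicator vector of the $i$-th block of row/column indices, i.e.\ $s_{i}$ has a $1$ in every position belonging to block $i$ and $0$ elsewhere. Set $S=(s_{1}\;s_{2}\;\cdots\;s_{t})$, an $n\times t$ matrix. Then $S^{T}S=\mathrm{diag}(n_{1},\ldots,n_{t})$, and a direct block-wise computation shows that the $(i,j)$-entry of $S^{T}MS$ equals the sum of all entries of $M_{ij}$, which by definition is $n_{i}b_{ij}$. Consequently
\begin{equation*}
(S^{T}S)^{-1}S^{T}MS=B,
\end{equation*}
so $B$ is similar to
\begin{equation*}
\widetilde{B}=(S^{T}S)^{-1/2}\,S^{T}MS\,(S^{T}S)^{-1/2}=C^{T}MC,
\end{equation*}
where $C=S(S^{T}S)^{-1/2}$ satisfies $C^{T}C=I_{t}$. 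Since $M$ is symmetric, $\widetilde{B}$ is symmetric, and therefore the spectrum of $B$ is real and coincides with the spectrum of $\widetilde{B}$.

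The remaining step is the standard Cauchy-type interlacing for $C^{T}MC$ when $C^{T}C=I_{t}$: extend $C$ to an orthogonal $n\times n$ matrix $[\,C\;\;C'\,]$; then $[\,C\;\;C'\,]^{T}M[\,C\;\;C'\,]$ is symmetric and similar to $M$, while $C^{T}MC$ is its leading $t\times t$ principal submatrix. The interlacing theorem for principal submatrices of a symmetric matrix then yields $\lambda_{i}(M)\geq\lambda_{i}(C^{T}MC)\geq\lambda_{n-t+i}(M)$ for $i=1,\ldots,t$, which together with $\sigma(B)=\sigma(\widetilde{B})$ gives the desired interlacing between $\sigma(B)$ and $\sigma(M)$.

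The main obstacle is conceptual rather than computational: the matrix $B$ as defined need not itself be symmetric, so one must first justify that its eigenvalues are real before the word \emph{interlace} even makes sense. This is handled precisely by the similarity $B\sim C^{T}MC$ noted above, which is the decisive structural observation. Once this is in hand, everything else reduces to a routine verification and a direct appeal to the classical Cauchy interlacing theorem.
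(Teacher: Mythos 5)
Your proof is correct, and since the paper offers no proof of Lemma \ref{lem22} (it is simply quoted from Haemers' paper on interlacing), your argument is exactly the standard one from that source: realize $B$ as similar to the symmetric compression $C^{T}MC$ with $C=S(S^{T}S)^{-1/2}$ and $C^{T}C=I_t$, then apply Cauchy interlacing (which is the paper's Lemma \ref{lem25}). Your remark that one must first establish the reality of $\sigma(B)$ via this similarity, since $B$ itself need not be symmetric, is the right point to emphasize.
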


%\begin{lem}\label{lem23}{\rm(\cite{2015MYL})}
%Let $M=(m_{ij})_{n\times n}$ be defined as (\ref{eq21}),  and for any $i,j \in\{ 1,2\ldots,t\}$, the row sum of each block $M_{ij}$ be constant.
%Let $B=B(M)=(b_{ij})$ be the equitable quotient matrix of $M$, and $\lambda$ be an  eigenvalue of $B$. Then $\lambda$  is also  an eigenvalue of $M$.
%\end{lem}

\begin{lem}\label{lem24}{\rm(\cite{2015MYL})}
Let $M$ be defined as (\ref{eq21}), and for any $i,j \in\{ 1,2\ldots,t\}$,
 $M_{ii} = l_iJ_{n_i} + p_iI_{n_i},$ $M_{ij} = s_{ij}J_{n_i,n_j}$ for $i\not= j$, where $l_i, p_i, s_{ij}$ are real numbers,
  $B=B(M)$ be the  quotient matrix of $M$.  Then
\begin{equation}\label{eq22}
\sigma(M)=\sigma(B)\cup \{p_i^{[n_i-1]} \mid i = 1,2\ldots,t\},
\end{equation}
 where $\lambda^{[t]}$ means that $\lambda$ is an eigenvalue with multiplicity $t$.
\end{lem}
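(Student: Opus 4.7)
The plan is to exhibit two complementary $M$-invariant subspaces of $\mathbb{R}^n$ on which the action of $M$ is transparent. Let $\mathbf{1}_{n_i}$ denote the all-ones column vector in $\mathbb{R}^{n_i}$, and for each $i$ let $\tilde{\mathbf{1}}_i \in \mathbb{R}^n$ be the vector obtained by placing $\mathbf{1}_{n_i}$ in the $i$-th block and zeros elsewhere. Set
\[
V_1 = \mathrm{span}\{\tilde{\mathbf{1}}_1, \ldots, \tilde{\mathbf{1}}_t\}, \qquad V_2 = V_1^{\perp},
\]
so $V_2$ consists of those vectors whose $k$-th block is orthogonal to $\mathbf{1}_{n_k}$ for every $k$. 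Then $\dim V_1 = t$ and $\dim V_2 = \sum_{i=1}^t (n_i-1) = n - t$, which already matches the total multiplicity count on the right-hand side of (\ref{eq22}).

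The first step is to handle $V_2$. For $v \in V_2$ with blocks $v^{(k)}$ satisfying $\mathbf{1}_{n_k}^{\top} v^{(k)} = 0$, the identities $J_{n_j,n_k} v^{(k)} = (\mathbf{1}_{n_k}^{\top} v^{(k)})\,\mathbf{1}_{n_j} = 0$ for $j \neq k$ together with $J_{n_j} v^{(j)} = 0$ show that the $j$-th block of $Mv$ is exactly $p_j v^{(j)}$. Hence $V_2$ is $M$-invariant and $M|_{V_2}$ is a direct sum of scalar operators $p_i I$ acting on the $(n_i - 1)$-dimensional ``sum-zero'' subspace of the $i$-th block. This contributes the eigenvalues $p_i$ with multiplicity $n_i - 1$, for $i = 1, \ldots, t$.

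Next, for $V_1$, I would compute $M \tilde{\mathbf{1}}_i$ directly using $J_{n_i} \mathbf{1}_{n_i} = n_i \mathbf{1}_{n_i}$ and $J_{n_j,n_i} \mathbf{1}_{n_i} = n_i \mathbf{1}_{n_j}$ to obtain
\[
M \tilde{\mathbf{1}}_i = (l_i n_i + p_i)\,\tilde{\mathbf{1}}_i + \sum_{j \neq i} n_i s_{ji}\,\tilde{\mathbf{1}}_j,
\]
which shows $V_1$ is $M$-invariant. Reading off the coefficients and comparing with the block row sums $b_{ii} = l_i n_i + p_i$ and $b_{ji} = n_i s_{ji}$ that define the quotient matrix $B$, one sees that the matrix representing $M|_{V_1}$ in the basis $\{\tilde{\mathbf{1}}_1, \ldots, \tilde{\mathbf{1}}_t\}$ is exactly $B$; consequently $\sigma(M|_{V_1}) = \sigma(B)$.

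Since $\mathbb{R}^n = V_1 \oplus V_2$ is a direct-sum decomposition into $M$-invariant subspaces, the characteristic polynomial splits as $P_M(\lambda) = P_B(\lambda) \cdot \prod_{i=1}^{t}(\lambda - p_i)^{n_i - 1}$, which is precisely (\ref{eq22}). There is no real obstacle in this argument: the only point requiring care is the bookkeeping that identifies the matrix of $M|_{V_1}$ with $B$ itself (rather than $B^{\top}$), but this is settled by matching the ``coefficient of $\tilde{\mathbf{1}}_j$ in $M \tilde{\mathbf{1}}_i$'' with the entry $b_{ji}$ of $B$.
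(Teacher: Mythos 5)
Your proof is correct and complete: the decomposition $\mathbb{R}^n = V_1 \oplus V_2$ into the span of the block-indicator vectors and its orthogonal complement, the verification that both are $M$-invariant, the identification of $M|_{V_1}$ with $B$ via the coefficient matching $b_{ii}=l_in_i+p_i$, $b_{ji}=n_is_{ji}$, and the observation that $M|_{V_2}$ acts as $p_i$ on each $(n_i-1)$-dimensional sum-zero subspace all check out, and the resulting factorization $P_M(\lambda)=P_B(\lambda)\prod_i(\lambda-p_i)^{n_i-1}$ gives exactly (\ref{eq22}) as a multiset identity. The paper itself gives no proof of this lemma (it is quoted from a cited, submitted manuscript), so there is nothing to compare against; your argument is the standard and expected one for equitable quotient matrices of this special block form.
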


By Lemma \ref{lem24}, we can obtain the distance (signless Laplacian) spectrum of $K_{a,b}$ as follows  immediately, where $n=a+b$.

\begin{equation}\label{eq23}
\sigma(\mathcal{D}(K_{a,b}))=\left\{(-2)^{[n-2]},  n-2\pm\sqrt{n^{2}-3ab} \right\},
\end{equation}
and
\begin{equation}\label{eq24}
\sigma(\mathcal{Q}(K_{a,b}))=\left\{(2n-a-4)^{[b-1]},(2n-b-4)^{[a-1]}, \frac{5n-8 \pm \sqrt{9n^{2}-32ab}}{2}\ \right\}.
\end{equation}

\begin{lem}{\rm(\cite{1995})}\label{lem25}
 Let $H_{n}$ denote the set of all $n\times n$ Hermitian matrices, $A \in  H_{n}$ with eigenvalues
$\lambda_{1} \geq \lambda_{2} \geq ...\geq \lambda_{n}$,  and $B$ be a $m\times m$ principal matrix of $A$
 with eigenvalues $\mu_{1} \geq \mu_{2} \geq ... \geq \mu_{m}$.
 Then $\lambda_{i}\geq \mu_{i} \geq \lambda_{n-m+i}$ for $i = 1, 2,..., m$.
\end{lem}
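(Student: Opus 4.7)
The plan is to derive Lemma~\ref{lem25} (the Cauchy interlacing inequalities) from the Courant--Fischer min-max characterization of eigenvalues of Hermitian matrices. Recall that for $A\in H_n$ with eigenvalues $\lambda_1\ge\cdots\ge\lambda_n$, Courant--Fischer provides the dual descriptions
$$\lambda_i \;=\; \max_{\dim S=i}\;\min_{0\ne x\in S}\frac{x^{*}Ax}{x^{*}x} \;=\; \min_{\dim S=n-i+1}\;\max_{0\ne x\in S}\frac{x^{*}Ax}{x^{*}x},$$
where $S$ ranges over subspaces of $\mathbb{C}^n$ of the stated dimension. Because $B$ is an $m\times m$ principal submatrix of $A$, there is a coordinate subspace $V\subseteq\mathbb{C}^n$ of dimension $m$, spanned by the standard basis vectors indexing the rows and columns of $A$ retained in $B$, such that under the natural identification $V\cong\mathbb{C}^m$ one has $x^{*}Ax=y^{*}By$ whenever $x\in V$ corresponds to $y\in\mathbb{C}^m$. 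Applying Courant--Fischer to $B$ and transporting it through this identification yields
$$\mu_i \;=\; \max_{\substack{\dim S=i\\ S\subseteq V}}\;\min_{0\ne x\in S}\frac{x^{*}Ax}{x^{*}x} \;=\; \min_{\substack{\dim S=m-i+1\\ S\subseteq V}}\;\max_{0\ne x\in S}\frac{x^{*}Ax}{x^{*}x}.$$

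The inequality $\lambda_i\ge\mu_i$ then follows at once: the outer maximum defining $\lambda_i$ ranges over a strictly larger collection of $i$-dimensional subspaces than the one defining $\mu_i$, so enlarging the feasible set cannot decrease the maximum. Symmetrically, for $\mu_i\ge\lambda_{n-m+i}$ I would invoke the min-formulations: $\lambda_{n-m+i}$ is a minimum over all $(m-i+1)$-dimensional subspaces of $\mathbb{C}^n$, whereas $\mu_i$ is the minimum over the subcollection of such subspaces that lie inside $V$, and restricting to a smaller collection can only raise the minimum.

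The only technical point is to justify carefully that the min-max characterization for $\mu_i$ transfers via the embedding $V\hookrightarrow\mathbb{C}^n$; once the quadratic form $x\mapsto x^{*}Ax$ is verified to coincide with $y\mapsto y^{*}By$ on the subspace, both halves of the interlacing emerge simultaneously from the two Courant--Fischer formulations with no further computation. An alternative route would be an explicit induction that removes the rows/columns one at a time using Cauchy's classical interlacing for bordered matrices, but the min-max approach above is cleaner since it treats both inequalities uniformly and avoids the combinatorial bookkeeping of iterated deletions.
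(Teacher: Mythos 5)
Your proposal is correct. Note, however, that the paper offers no proof of this lemma at all: it is quoted verbatim from the reference \cite{1995} as the classical Cauchy interlacing theorem for principal submatrices of Hermitian matrices, so there is no internal argument to compare against. Your Courant--Fischer derivation is the standard textbook proof and is sound as written: identifying $B$ with the restriction of the quadratic form $x\mapsto x^{*}Ax$ to the coordinate subspace $V$ is immediate (the inclusion $\mathbb{C}^m\cong V\hookrightarrow\mathbb{C}^n$ is an isometry and $P_V^{*}AP_V=B$ for the corresponding embedding matrix $P_V$), the max-min formulation over the smaller family of subspaces $S\subseteq V$ gives $\lambda_i\geq\mu_i$, and the min-max formulation with $\dim S=m-i+1=n-(n-m+i)+1$ gives $\mu_i\geq\lambda_{n-m+i}$. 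Both index counts check out, so the argument is complete.
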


%%%%%%%%%%%%%%%%%%%%%%%%%%%%%%%%%%%%%%%%%%%%%%%%%%%%%%%%%%%%%%%%%%%%%%%%%%%%%%%%%%%%%%%%%%%%%%%%%%%%%%%%%%%%%%%%%%%%%%%%%%%%%%%%%%%%%%%%%%%%%%%%%%%%%%5
\section{Results on $S_{\mathcal{D}}$ for  a bipartite graph}
%\subsection{Bound on $S_{\mathcal{D}}(G)$ for a bipartite graph $G$}

\hskip 0.6cm
In \cite{2012}, the authors obtained a lower bound for $S_{\mathcal{D}}(G)$ with the maximum degree $\Delta$ of $G$,
but it is found that the result is incorrect when $\Delta\leq |V(G)|-2$. In this section, we rectify it.

Let $G=(V, E)$ be a graph. For $v_i, v_j\in V$, if $v_i$ is adjacent to $v_j$,  we denote it by $v_i\sim v_j$ (simply, $i\sim j$). We let $t_{v}=\frac{\sum\limits_{v_{i}\sim v}{D_{i}}}{d_{v}}$ be the $average$ $distance$ $degree$ of $v$ (\cite{2012}).

%In \cite{2012}, the authors  studied $S_{\mathcal{D}}(G)$ with given the maximum degree of $G$
%and they obtained the following results.

\begin{prop}{\rm(\cite{2012}, Theorem 2.4)}\label{prop301}
Let $G$ be a simple connected bipartite graph on $n$ vertices with $S=\sum\limits_{i=1}^{n}D_{i}$
and maximum degree $\Delta$. Suppose $deg(v_{1})=deg(v_{2})=\cdots= deg(v_{k})=\Delta$.
Then

\noindent {\rm (i)}  if $\Delta \leq n-2,$ we have
\begin{equation}\label{eq301}
S_{\mathcal{D}}(G)\geq \max\limits_{1\leq i\leq k}{\frac{\sqrt{a_{i}^{2}-4 b_{i}(\Delta+1)(n-\Delta-1)}}{(\Delta+1)(n-\Delta-1)}},
\end{equation}
where $ a_{i}=2(n-t_{v_{i}}-1)\Delta^{2}
+(S-2t_{v_{i}}-2)\Delta+S$ and  $b_{i}=\Delta^{2}(2S-t_{v_{i}}^{2}-2t_{v_{i}}-1).$

\noindent {\rm (ii)}  if $\Delta=n-1$, we have
\begin{equation*}
  S_{\mathcal{D}}(G)= \begin{cases}
              0, & \mbox{if } n=1; \\
              2, & \mbox{if } n=2; \\
              n+\sqrt{n^{2}-3n+3}, & \mbox{if }  n\geq 3.
            \end{cases}
\end{equation*}
\end{prop}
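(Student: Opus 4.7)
Part (ii) is essentially a direct spectral computation. When $\Delta=n-1$, the vertex $v_{1}$ is adjacent to every other vertex, and bipartiteness forbids any edge among those neighbours, so $G=K_{1,n-1}$. Substituting $a=1$, $b=n-1$ in (\ref{eq23}) yields $\sigma(\mathcal{D}(K_{1,n-1})) = \{(-2)^{[n-2]},\,(n-2)\pm\sqrt{n^{2}-3n+3}\}$. A quick squaring check shows $(n-2)-\sqrt{n^{2}-3n+3} > -2$ for $n\geq 3$, so the smallest eigenvalue is $-2$ and the spread is $n+\sqrt{n^{2}-3n+3}$; the cases $n=1,2$ are immediate.

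For part (i), the strategy is an interlacing / quotient-matrix argument that leverages bipartiteness. Fix any $v=v_{i}$ of maximum degree $\Delta\leq n-2$ and partition $V(G)=A\sqcup B$ with $A=\{v\}\cup N(v)$ of size $\Delta+1$ and $B=V(G)\setminus A$ of size $n-\Delta-1\geq 1$. Bipartiteness is used in an essential way: for distinct $w,w'\in N(v)$, both lie in the colour class opposite to $v$, so $d(w,w')$ is even, while the path $w$–$v$–$w'$ shows $d(w,w')\leq 2$. Hence $d(w,w')=2$, the entire $A\times A$ block of $\mathcal{D}(G)$ is pinned down, and $\sum_{x,y\in A}d(x,y)=2\Delta^{2}$. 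Using $\sum_{w\sim v}D_{w}=\Delta t_{v}$ and $\sum_{u}D_{u}=S$, one then writes $\sum_{A\times B}d = D_{v}+\Delta t_{v}-2\Delta^{2}$ and $\sum_{B\times B}d = S-2D_{v}-2\Delta t_{v}+2\Delta^{2}$ in closed form.

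Form the $2\times 2$ quotient matrix $Q$ of $\mathcal{D}(G)$ with respect to this partition. By Lemma \ref{lem22} its two eigenvalues interlace those of $\mathcal{D}(G)$, hence
$$S_{\mathcal{D}}(G)\;\geq\;\mathrm{spread}(Q)\;=\;\sqrt{(Q_{11}-Q_{22})^{2}+4\,Q_{12}Q_{21}}.$$
Clearing the common denominator $(\Delta+1)(n-\Delta-1)$ and collecting, one aims to bring the right-hand side into the form $\sqrt{a_{i}^{2}-4b_{i}(\Delta+1)(n-\Delta-1)}\,\big/\,[(\Delta+1)(n-\Delta-1)]$ that appears in (\ref{eq301}). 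Taking the maximum over $i=1,\ldots,k$ then finishes the proof.

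\textbf{Main obstacle.} The delicate step is the final algebraic rearrangement. The block sums above explicitly involve $D_{v}$, whereas the stated $a_{i},b_{i}$ depend only on $\Delta,n,S,t_{v_{i}}$; so the derivation must either cancel the $D_{v}$-terms by an identity (perhaps by substituting a bipartite estimate such as $D_{v}=\Delta+\sum_{u\in B}d(v,u)$ and rewriting in terms of row-sums of the neighbourhood) or, alternatively, it must weaken the quotient-matrix spread by a sharp bipartite-specific inequality that trades $D_{v}$ for a function of $t_{v}$ alone. Pinpointing this replacement precisely—and thereby isolating the place where the original 2012 argument silently assumed too much—is the technical crux of the rectification and the only nonroutine part of the proof.
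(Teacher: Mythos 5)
Your part (ii) is correct and is exactly the computation from (\ref{eq23}). But for part (i) you are attempting to prove the very statement this paper exists to refute: Proposition \ref{prop301}(i) is the erroneous Theorem 2.4 of \cite{2012}, and Section 3 contains no proof of it. Instead it exhibits the graph $G_1$ of Fig.~3.1, for which the quotient matrix $B_1$ of (\ref{eq303}) used in the 2012 argument disagrees with the actual quotient matrix $B_2$ computed from (\ref{eq304}), and it then replaces the bound (\ref{eq301}) by the corrected bound (\ref{eq305}) of Theorem \ref{thm302}.

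The telling point is that your own block-sum computation is already the correct one: $\sum_{A\times B}d=D_{v}+\Delta t_{v}-2\Delta^{2}$ and $\sum_{B\times B}d=S-2D_{v}-2\Delta t_{v}+2\Delta^{2}$ are precisely the entries of the quotient matrix $B$ in the proof of Theorem \ref{thm302}. The ``main obstacle'' you flag --- that these sums involve $D_{v}$ while the stated $a_{i},b_{i}$ depend only on $\Delta,n,S,t_{v_{i}}$ --- is not a gap to be bridged by some hidden identity; it is the error itself. No such identity exists: the 2012 derivation silently replaced $D_{v_{i}}$ by $\Delta$, which is legitimate only when $\Delta=n-1$, i.e.\ exactly the case excluded from (i) (for $G_1$ one has $D_{v_1}=9\neq 3=\Delta$). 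Carrying your interlacing argument through with the correct entries yields the spread of $B$, namely the right-hand side of (\ref{eq305}) with $a_{i}=(\Delta+1)(S-2D_{i}-2t_{v_{i}}\Delta)+2n\Delta^{2}$, $b_{i}=D_{i}^{2}-2S\Delta^{2}+2D_{i}t_{v_{i}}\Delta+t_{v_{i}}^{2}\Delta^{2}$ and a plus sign under the radical --- not (\ref{eq301}). So the honest conclusion of your own argument is Theorem \ref{thm302}; Proposition \ref{prop301}(i) should be rejected, not proved.
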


Let $ N(v_{i})=\{v_{i_{1}}, v_{i_{2}}, \ldots, v_{i_{\Delta}}\}$ be the neighbors set of $v_{i}$,
 and $N[v_i]=N(v_i)\cup\{v_i\}$. In the proof of (\ref{eq301}),
the authors partition $V(G)$ into two parts $N[v_{i}]$ and $V(G)\setminus N[v_{i}]$ for some $1\leq i \leq k$.
  Corresponding to this partition, $\mathcal{D}(G)$ can be written as

  \begin{equation}\label{eq302}
\mathcal{D}(G)%=(d_{ij})_{n\times n}
= \begin{pmatrix}
            \begin{matrix}   0 & 1 & 1 &  \dots & 1 \\ 1 & 0 & 2 & \dots & 2 \\ 1 & 2 & 0 & \dots & 2 \\ \dots & \dots & \dots & \dots & \dots  \\ 1 &  2 & 2 & \dots & 0 \end{matrix}
            & \text{\Large * } \\
              \text{\Large * }
             & \text{\Large * } \\
          \end{pmatrix}.\end{equation}

Then the author presented  the quotient matrix of $\mathcal{D}(G)$ as:
\begin{equation}\label{eq303}
B_1= \begin{pmatrix}
   \frac{2\Delta ^{2}}{\Delta +1} & \frac{t_{v_{i}}\Delta +\Delta-2\Delta^{2}}{\Delta +1} \\
    \frac{t_{v_{i}}\Delta +\Delta-2\Delta^{2}}{n-\Delta -1 } & \frac{S-2 t_{v_{i}} \Delta +2\Delta(\Delta-1)}{n-\Delta -1}
     \end{pmatrix}.\end{equation}

The following example shows that (\ref{eq303}) is false.

\setlength{\unitlength} {4mm}
\begin{center}
\begin{picture}(15,10)
\put(3,5){\circle* {0.1}}   \put(5,5){\circle* {0.1}}     \put(10,5){\circle* {0.1}}     \put(3,10){\circle* {0.1}}     \put(5,10){\circle* {0.1}}
\put(10,10){\circle* {0.1}}     \put(13,10){\circle* {0.1}}
\put(2,4){$v_{7}$}          \put(5,4){$v_{3}$}         \put(10.3,4){$v_{6}$}       \put(2,9){$v_{4}$}          \put(4,9){$v_{1}$}
\put(8.5,9){$v_{2}$}          \put(13.3,9){$v_{5}$}
\put(3,5){\line(2,0){2}}    \put(5,5){\line(0,5){5}}      \put(5,5){\line(5,0){5}}      \put(5,10){\line(-2,0){2}}       \put(5,10){\line(5,0){5}}      \put(10,5){\line(0,5){5}}    \put(10,10){\line(3,0){3}}
\end{picture}
\end{center}
 \vskip-1.7cm\hskip6.5cm  Fig. 3.1. \hskip.1cm  $G_1$
 \vskip0.2cm
For the above graph (see Fig. 3.1), it is clear that $\Delta=3, t_{v_1}=\frac{34}{3}, S=84$ and
\begin{equation}\label{eq304}
\mathcal{D}(G)=(d_{ij})_{7\times 7}=\left(
                                      \begin{array}{cccc|ccc}
             0 & 1 & 1 & 1 & 2 & 2 & 2 \\
             1 & 0 & 2 & 2 & 1 & 1 & 3 \\
             1 & 2 & 0 & 2 & 3 & 1 & 1 \\
             1 & 2 & 2 & 0 & 3 & 3 & 3 \\\hline
             2 & 1 & 3 & 3 & 0 & 2 & 4 \\
             2 & 1 & 1 & 3 & 2 & 0 & 2 \\
             2 & 3 & 1 & 3 & 4 & 2& 0
                                      \end{array}
                                    \right),\end{equation}
 and by (\ref{eq303}), we have the quotient matrix
$B_1=\begin{pmatrix}
   \frac{18}{4} & \frac{19}{4} \\
    \frac{19}{3 } & \frac{28}{3}
\end{pmatrix}.$
In fact  the quotient matrix  can be computed by  the definition of  the quotient matrix and (\ref{eq304}) immediately  as
$B_{2}= \begin{pmatrix}
     \frac{18}{4} & \frac{25}{4} \\
    \frac{25}{3 } & \frac{16}{3}
\end{pmatrix}\not=B_1,$ it is a contradiction.

Noticing the error in (\ref{eq303}), with the similar technique,
we rectify (\ref{eq301}) as follows.

\begin{them}\label{thm302}
 Let $G$ be a simple connected bipartite graph on $n$ vertices, $\Delta$ be the  maximum degree of $G$,
 $S=\sum\limits_{i=1}^{n}D_{i}$.
 Suppose that  $deg(v_1)=deg(v_2)=\ldots=deg(v_k)=\Delta \leq n-2$ % > deg(v_{k+1}) \geq deg(v_{k+2})\geq ... \geq deg(v_{n}),$
 for some $k\ (1\leq k\leq n)$. Then
\begin{equation}\label{eq305}
S_{\mathcal{D}}(G)\geq \max\limits_{1\leq i\leq k}{\frac{\sqrt{a_{i}^{2}+4 b_{i}(1+\Delta)(n-\Delta-1)}}{(1+\Delta)(n-\Delta-1)}},
\end{equation}
\noindent where $a_{i}=(\Delta+1)(S-2D_{i}-2t_{v_{i}}\Delta)+2n\Delta^{2}$ and $b_{i}=D_{i}^{2}-2S\Delta^{2}+2D_{i}t_{v_{i}}\Delta+t_{v_{i}}^{2}\Delta^{2}.$
\end{them}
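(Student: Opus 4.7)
The plan is to follow the same partition-and-interlace strategy as in the original argument of \cite{2012}, but redo the block-sum computation carefully so that the quotient matrix is correct (this is precisely where the cited paper slipped in (\ref{eq303})). The bipartiteness of $G$ enters in exactly one place but is essential: for a fixed vertex $v_i$ with $\deg(v_i)=\Delta$, any two neighbors of $v_i$ lie in the same part of the bipartition and share $v_i$ as a common neighbor, so their distance is at most $2$; being bipartite, it must be even, hence exactly $2$. This is what makes the top-left block of $\mathcal{D}(G)$ in (\ref{eq302}) combinatorially explicit.

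Fix $i\in\{1,\ldots,k\}$ and partition $V(G)=N[v_i]\cup (V(G)\setminus N[v_i])$, with $|N[v_i]|=1+\Delta$ and $|V(G)\setminus N[v_i]|=n-\Delta-1\geq 1$ (this uses $\Delta\leq n-2$). Writing $\mathcal{D}(G)$ in block form as in (\ref{eq302}), I would compute the four block-sums and divide by the appropriate number of rows. In the top-left block the row for $v_i$ sums to $\Delta$ and each of the $\Delta$ rows for a neighbor $v_j$ of $v_i$ sums to $1+2(\Delta-1)=2\Delta-1$, giving $b_{11}=2\Delta^{2}/(1+\Delta)$. For the top-right block the row of $v_i$ sums to $D_i-\Delta$ and the row of a neighbor $v_j$ sums to $D_j-(2\Delta-1)$; summing over the $1+\Delta$ rows and using $\sum_{v_j\sim v_i}D_j=\Delta t_{v_i}$ gives block-sum $D_i+t_{v_i}\Delta-2\Delta^{2}$, hence $b_{12}=(D_i+t_{v_i}\Delta-2\Delta^{2})/(1+\Delta)$ and, by the symmetry of $\mathcal{D}(G)$, $b_{21}=(D_i+t_{v_i}\Delta-2\Delta^{2})/(n-\Delta-1)$. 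Finally, the bottom-right block-sum is $S-2\Delta^{2}-2(D_i+t_{v_i}\Delta-2\Delta^{2})=S-2D_i-2t_{v_i}\Delta+2\Delta^{2}$, yielding $b_{22}=(S-2D_i-2t_{v_i}\Delta+2\Delta^{2})/(n-\Delta-1)$. I would sanity-check this quotient matrix on the graph $G_1$ of Fig.~3.1, where it must reproduce $B_2$ rather than the incorrect $B_1$.

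By Lemma \ref{lem22} the eigenvalues $\lambda_1(B)\geq\lambda_2(B)$ of $B$ interlace those of $\mathcal{D}(G)$, so $\rho^{\mathcal{D}}(G)\geq \lambda_1(B)$ and $\rho_{\min}^{\mathcal{D}}(G)\leq \lambda_2(B)$, and therefore
\begin{equation*}
S_{\mathcal{D}}(G)\;\geq\;\lambda_1(B)-\lambda_2(B)\;=\;\sqrt{(b_{11}-b_{22})^{2}+4\,b_{12}b_{21}}.
\end{equation*}
Multiplying by $(1+\Delta)(n-\Delta-1)$ and writing $N$ for the numerator of $b_{11}-b_{22}$, the remaining task is to verify the algebraic identity
\begin{equation*}
N^{2}+4(1+\Delta)(n-\Delta-1)\bigl(D_i+t_{v_i}\Delta-2\Delta^{2}\bigr)^{2}\;=\;a_i^{2}+4b_i(1+\Delta)(n-\Delta-1).
\end{equation*}
I would verify this by computing $N+a_i=4\Delta^{2}(n-\Delta-1)$ and $N-a_i=-2(1+\Delta)(2\Delta^{2}+S-2D_i-2t_{v_i}\Delta)$, which together give $N^{2}-a_i^{2}$ in a form that cancels with the difference of the second terms, reducing everything to $b_i=D_i^{2}-2S\Delta^{2}+2D_it_{v_i}\Delta+t_{v_i}^{2}\Delta^{2}$, exactly as stated.

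The main obstacle is not conceptual but notational: the original error was in exactly this bookkeeping, so the only real work is to track signs and the contribution of the $v_i$-row versus the $\Delta$ neighbor-rows correctly in $M_{12}$. Using the small example $G_1$ with $\Delta=3$, $D_1=9$, $t_{v_1}=34/3$, $S=84$ as a numerical cross-check at every stage (quotient matrix, $a_i$, $b_i$) should prevent repeating the earlier mistake.
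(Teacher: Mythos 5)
Your proposal is correct and follows essentially the same route as the paper: the same partition into $N[v_i]$ and its complement, the same (corrected) quotient matrix, and Lemma \ref{lem22} to interlace; your expression $\sqrt{(b_{11}-b_{22})^{2}+4b_{12}b_{21}}$ is just the paper's root difference $\frac{\sqrt{a_i^{2}+4b_i(1+\Delta)(n-\Delta-1)}}{(1+\Delta)(n-\Delta-1)}$ written via trace and determinant, and your block-sum bookkeeping and the identity $N+a_i=4\Delta^{2}(n-\Delta-1)$, $N-a_i=-2(1+\Delta)(2\Delta^{2}+S-2D_i-2t_{v_i}\Delta)$ both check out.
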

\begin{proof}
$V(G)$ is partitioned  into two parts which are  $N[v_{i}]$ and $V(G)\setminus N[v_{i}]$ for some $1\leq i \leq k$.
  Corresponding to this partition, $\mathcal{D}(G)$ is written as (\ref{eq302})
and the quotient matrix $B$ of $\mathcal{D}(G)$ is presented  as follow:
\begin{equation*}
B= \begin{pmatrix}
   \frac{2\Delta ^{2}}{\Delta +1} & \frac{t_{v_{i}}\Delta +D_{i}-2\Delta^{2}}{\Delta +1} \\
    \frac{t_{v_{i}}\Delta +D_{i}-2\Delta^{2}}{n-\Delta -1 } & \frac{S-2 t_{v_{i}} \Delta +2\Delta^{2}-2D_{i}}{n-\Delta -1}
\end{pmatrix}.\end{equation*}

Then
\begin{equation*}  \begin{array}{l}
%\begin{split}
P_{B}(\lambda)=|\lambda I- B |
%\noindent\hskip1.2cm
=\lambda^{2}-\frac{(\Delta+1)(S-2D_{i}-2t_{v_{i}}\Delta)+2n\Delta^{2}}{(1+\Delta)(n-\Delta-1)}\lambda -\frac{D_{i}^{2}-2S\Delta^{2}+2D_{i}t_{v_{i}}\Delta+t_{v_{i}}^{2}\Delta^{2}}{(1+\Delta)(n-\Delta-1)}
 \end{array}.%\end{split}
 \end{equation*}

Let $P_{B}(\lambda)=0$. It follows that
$$\lambda_{1,2}=\frac{a_{i}\pm\sqrt{a_{i}^{2}+4b_{i}(1+\Delta)(n-\Delta-1)}}{2(1+\Delta)(n-\Delta-1)},$$
where $a_{i}=(\Delta+1)(S-2D_{i}-2t_{v_{i}}\Delta)+2n\Delta^{2}$ and $b_{i}=D_{i}^{2}-2S\Delta^{2}+2D_{i}t_{v_{i}}\Delta+t_{v_{i}}^{2}\Delta^{2}.$
Using Lemma \ref{lem22} gets (\ref{eq305}).
\end{proof}

{\bf Remark 3.1}
\vskip 0.9cm \setlength{\unitlength} {4mm}
\begin{center}
\begin{picture}(15,10)
\put(5,5){\circle* {0.1}}     \put(10,5){\circle* {0.1}}    \put(5,8){\circle* {0.1}}     \put(10,8){\circle* {0.1}}
\put(5,11){\circle* {0.1}}     \put(10,11){\circle* {0.1}}     \put(13,11){\circle* {0.1}}   \put(13,8){\circle* {0.1}}
\put(13,5){\circle* {0.1}}
\put(4.5,4){$v_{5}$}                \put(9.5,4){$v_{6}$}             \put(4,10){$v_{3}$}           \put(9,10){$v_{2}$}          \put(4.1,7){$v_{4}$}
\put(9,7){$v_{1}$}                 \put(13.5,10.5){$v_{7}$}                  \put(13.5,7.5){$v_{8}$}              \put(13.5,4.5){$v_{9}$}
\put(5,8){\line(0,3){3}}     \put(5,8){\line(0,-3){3}}      \put(5,8){\line(5,0){5}}       \put(10,11){\line(-5,0){5}}      \put(10,11){\line(0,-3){3}}           \put(10,5){\line(0,3){3}}     \put(10,5){\line(-5,0){5}}    \put(10,8){\line(1,1){3}}       \put(10,8){\line(3,0){3}}       \put(10,8){\line(1,-1){3}}
\put(4.5,1){Fig. 3.2. \hskip.1cm  $G_2$}

\end{picture}
\end{center}

\begin{center}
\begin{tabular}{|c|c|c|}
  \hline
  % after \\: \hline or \cline{col1-col2} \cline{col3-col4} ...
 graph   & Theorem \ref{thm302}                           & approximate   value  \\\hline
  $G_1$  & $S_{\mathcal{D}}(G)\geq 15.5960$              &$S_{\mathcal{D}}(G)\approx 17.6820$ \\\hline
  $G_2$  & $S_{\mathcal{D}}(G)\geq 19.0059$         &$S_{\mathcal{D}}(G)\approx 20.9674$ \\\hline
\end{tabular}
\\\vskip0.3cm  Table 3.1.%}}
\end{center}

By computation with mathematica for graphs $G_1$ and $G_2$ (see Figs. 3.1, 3.2 and Table 3.1),
it seems that Theorems \ref{thm302} is useful to evaluate the distance spread of a bipartite graph.

From the proof of Theorem \ref{thm302} and by Lemma \ref{lem22}, we have the following corollary  immediately.
\begin{cor}\label{cor301}
Let $G$ be a simple connected bipartite graph on $n\geq 3$ vertices with maximum degree
$\Delta\leq n-2$. Suppose that
 $deg(v_1)=deg(v_2)=\ldots=deg(v_k)=\Delta$, $a_{i}, b_{i}$ are defined as Theorem \ref{thm302}
 for $1\leq i\leq k$. Then

\noindent{\rm (i) } $\displaystyle \rho^{\mathcal{D}}(G)\geq \max\limits_{1\leq i\leq k}\frac{a_{i}+\sqrt{a_{i}^{2}+4b_{i}(1+\Delta)(n-\Delta-1)}}{2(1+\Delta)(n-\Delta-1)}$;

\noindent{\rm (ii)} $\displaystyle \rho^{\mathcal{D}}_{min}(G)\leq \min\limits_{1\leq i\leq k}\frac{a_{i}+\sqrt{a_{i}^{2}+4b_{i}(1+\Delta)(n-\Delta-1)}}{2(1+\Delta)(n-\Delta-1)}$.
\end{cor}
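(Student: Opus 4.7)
The plan is to recycle, verbatim, the quotient-matrix computation already carried out in the proof of Theorem \ref{thm302}. For each $i\in\{1,\dots,k\}$ with $\deg(v_i)=\Delta$, that proof partitions $V(G)$ into $\{N[v_i],\,V(G)\setminus N[v_i]\}$, writes down the $2\times 2$ quotient matrix $B$ of $\mathcal{D}(G)$ with respect to this partition, and solves $P_B(\lambda)=0$ to obtain its two eigenvalues
$$
\mu_{\pm}^{(i)} \;=\; \frac{a_i\pm\sqrt{a_i^{2}+4b_i(1+\Delta)(n-\Delta-1)}}{2(1+\Delta)(n-\Delta-1)},
$$
with $a_i,b_i$ exactly as defined in Theorem \ref{thm302}. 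So the computational content is already in hand; the corollary is an interlacing read-off.

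First, I would order the spectrum of $\mathcal{D}(G)$ as $\rho^{\mathcal{D}}(G)=\lambda_1\geq \lambda_2\geq\dots\geq \lambda_n=\rho^{\mathcal{D}}_{min}(G)$ and appeal to Lemma \ref{lem22} (the Cauchy-type interlacing for quotient matrices). Since $B$ has order $2$ and $\mathcal{D}(G)$ has order $n$, interlacing gives
$$
\lambda_1 \;\geq\; \mu_+^{(i)} \;\geq\; \lambda_{n-1}, \qquad \lambda_2 \;\geq\; \mu_-^{(i)} \;\geq\; \lambda_n.
$$
The left inequality of the first pair yields $\rho^{\mathcal{D}}(G)\geq \mu_+^{(i)}$, which is valid for every $i\in\{1,\dots,k\}$; maximizing over $i$ produces part (i) exactly as stated.

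For part (ii) I would use the right inequality of the second pair, $\rho^{\mathcal{D}}_{min}(G)=\lambda_n\leq \mu_-^{(i)}$, and then chain with the trivial estimate $\mu_-^{(i)}\leq \mu_+^{(i)}$ to conclude $\rho^{\mathcal{D}}_{min}(G)\leq \mu_+^{(i)}$ for each $i$; minimizing over $i$ then gives the inequality as written. (Note that one can in fact obtain the sharper upper bound with $\mu_-^{(i)}$ in place of $\mu_+^{(i)}$, but the displayed form follows immediately from what we already have.)

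There is no real obstacle here — the heavy lifting was done in Theorem \ref{thm302}. The only mild points of care are: (a) making sure that for each $v_i$ with $\deg(v_i)=\Delta$ the partition $\{N[v_i],V(G)\setminus N[v_i]\}$ is genuinely a partition (guaranteed by $\Delta\leq n-2$, which forces $V(G)\setminus N[v_i]\neq\emptyset$), so that the quotient matrix is well defined and Lemma \ref{lem22} applies; and (b) pairing the correct sign choice $\mu_{\pm}^{(i)}$ with $\rho^{\mathcal{D}}(G)$ and $\rho^{\mathcal{D}}_{min}(G)$ when invoking interlacing. Once these are checked, the corollary follows in one line from the formulas derived inside the proof of Theorem \ref{thm302}.
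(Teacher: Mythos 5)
Your proof is correct and is essentially the paper's own argument: the paper derives the corollary ``immediately'' from the $2\times 2$ quotient matrix computed in the proof of Theorem \ref{thm302} together with the interlacing Lemma \ref{lem22}, which is exactly the read-off you perform (part (i) from $\lambda_1\geq\mu_+^{(i)}$, part (ii) from $\lambda_n\leq\mu_-^{(i)}\leq\mu_+^{(i)}$). Your side remark is also apt: the sharper bound $\rho^{\mathcal{D}}_{min}(G)\leq\mu_-^{(i)}$, i.e.\ with $a_i-\sqrt{\,\cdot\,}$ in the numerator, is what actually combines with (i) to recover the spread bound of Theorem \ref{thm302}, so the ``$+$'' in part (ii) as printed is likely a typo for ``$-$''.
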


%%%%%%%%%%%%%%%%%%%%%%%%%%%%%%%%%%%%%%%%%%%%%%%%%%%%%%%%%%%%%%%%%%%%%%%%%%%%%%%%%%%%%%%%%%%%%%%%%%%%%%%%%%%%%%%%%%%%%%%%%%%%%%%%%%%%%%%%%%%%%%%%%%5

\section{On $S_{\mathcal{Q}}$}
\hskip.6cm In this section, we show some bounds of $S_{\mathcal{Q}}$ for bipartite graphs and some bounds with some parameters.

\subsection{Bounds on $S_{\mathcal{Q}}$ for bipartite graphs}
\hskip.6cm For a graph $G$, $W(G)=\sum \limits_{1 \leq i< j\leq n} d_{ij}$ is called $Wiener$ $index $. Thus, $W(G)=\frac{1}{2}\sum \limits_{i=1}^{n} D_{i}$ and $S=2W(G)$.  Similar to the proof of Theorem \ref{thm302} and Corollary \ref{cor301}, we get the following  theorem and one corollary in term of $ Wiener$ $ index $.

%\subsection{Bound on $S_{\mathcal{Q}}(G)$ for a bipartite graph $G$ with given the maximum degree}
\begin{them}\label{thm401}
  Let $G$ be a simple connected bipartite graph on $n\geq 3$ vertices with
 maximum degree $\Delta$ and Wiener index $W$.
 Suppose that $deg(v_1)=deg(v_2)=\ldots=deg(v_k)=\Delta$. Then

\noindent{\rm (i) }  if $\Delta \leq n-2,$ then
\begin{equation}\label{eq401}
S_{\mathcal{Q}}(G)\geq \max\limits_{1\leq i\leq k}{\frac{\sqrt{a_{i}^{2}+4b_{i}(1+\Delta)(n-\Delta-1)}}{(1+\Delta)(n-\Delta-1)}},
 \end{equation}
 where $a_{i}=4(W-D_{i}-t_{v_{i}}\Delta)(\Delta+1)+2n\Delta^{2}+nD_{i}+nt_{v_{i}}\Delta$ and $b_{i}=4D_{i}^{2}+8D_{i}t_{v_{i}}\Delta+4t_{v_{i}}^{2}\Delta^{2}-8W\Delta^{2}-4WD_{i}-4Wt_{v_{i}}\Delta.$

\noindent{\rm (ii) } if $\Delta=n-1$, then $S_{\mathcal{Q}}(G)=\sqrt{9n^{2}-32n+32}$.
\end{them}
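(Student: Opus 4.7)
The plan is to mimic the quotient-matrix / interlacing strategy used for Theorem \ref{thm302}. For part (i), I would partition $V(G)$ into $N[v_i]$ (of size $\Delta+1$) and $V(G)\setminus N[v_i]$ (of size $n-\Delta-1$) for a fixed $i\in\{1,\ldots,k\}$, and write $\mathcal{Q}(G)=Tr(G)+\mathcal{D}(G)$ in the associated $2\times 2$ block form. The only new contribution compared with the distance case is the diagonal transmission part: inside the top-left block the diagonal entries of $Tr(G)$ contribute $D_i+\Delta\, t_{v_i}$ (directly from the definition of $t_{v_i}$), while inside the bottom-right block they contribute $2W-D_i-\Delta\, t_{v_i}$. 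Combining these with the off-diagonal distance sums that are already implicit in the proof of Theorem \ref{thm302} — namely $2\Delta^2$ inside $N[v_i]$ (here the bipartite hypothesis is essential, since it forces any two distinct neighbours of $v_i$ to be at distance exactly $2$), $D_i+\Delta t_{v_i}-2\Delta^2$ across the two blocks, and $2W+2\Delta^2-2D_i-2\Delta t_{v_i}$ inside the outer block — yields the quotient matrix $B$ of $\mathcal{Q}(G)$.

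Next I would expand $P_B(\lambda)=\lambda^2-T\lambda+D$ by hand. Writing $P=D_i+\Delta t_{v_i}$, $Q=2\Delta^2$ and $R=4W-3P+Q$, the trace of $B$ simplifies to
\[
T \;=\; \frac{2n\Delta^2+(D_i+\Delta t_{v_i})(n-4\Delta-4)+4W(\Delta+1)}{(1+\Delta)(n-\Delta-1)},
\]
which matches $a_i/[(1+\Delta)(n-\Delta-1)]$ after regrouping. For the determinant the key identity is $(P+Q)R-(P-Q)^2=4\bigl[W(P+Q)-P^2\bigr]$, which after substituting back gives exactly $-b_i/[(1+\Delta)(n-\Delta-1)]$. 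Applying Lemma \ref{lem22} then yields $S_{\mathcal{Q}}(G)\ge \lambda_1(B)-\lambda_2(B)=\sqrt{T^2-4D}$, which rearranges to (\ref{eq401}); taking the maximum over $i\in\{1,\ldots,k\}$ completes part (i).

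For part (ii), the bipartiteness together with $\Delta=n-1$ forces $G=K_{1,n-1}$: the degree-$(n-1)$ vertex occupies one side of the bipartition by itself, and no two vertices of the opposite side can be adjacent. I would then read off $\sigma(\mathcal{Q}(K_{1,n-1}))$ from (\ref{eq24}) with $a=1$, $b=n-1$, obtaining the eigenvalue $2n-5$ with multiplicity $n-2$ together with the two roots $\tfrac{5n-8\pm\sqrt{9n^2-32n+32}}{2}$. A short inequality check shows $\tfrac{5n-8-\sqrt{9n^2-32n+32}}{2}\le 2n-5$ for $n\ge 4$, so the extreme eigenvalues of $\mathcal{Q}(K_{1,n-1})$ are exactly the two roots and their difference equals $\sqrt{9n^2-32n+32}$.

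The main obstacle is the algebraic bookkeeping in part (i): in view of the error in (\ref{eq303}) pointed out in Section 3, I would verify each block-sum contribution (and especially the off-diagonal block sum $D_i+\Delta t_{v_i}-2\Delta^2$, which is where the original paper slipped) very carefully before collapsing everything into $a_i$ and $b_i$. The identity $(P+Q)R-(P-Q)^2=4[W(P+Q)-P^2]$ is the one place where the calculation condenses neatly, and it also serves as a useful internal correctness check.
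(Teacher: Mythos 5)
Your proposal is correct and follows exactly the route the paper intends: the paper gives no separate argument for Theorem \ref{thm401}, saying only that it is ``similar to the proof of Theorem \ref{thm302}'', and your quotient matrix for $\mathcal{Q}(G)$ (adding $D_i+\Delta t_{v_i}$ to the inner diagonal block and $2W-D_i-\Delta t_{v_i}$ to the outer one), the trace and determinant computations via the identity $(P+Q)R-(P-Q)^2=4\bigl[W(P+Q)-P^2\bigr]$, and the application of Lemma \ref{lem22} all check out against the stated $a_i$ and $b_i$. One caveat in part (ii): your inequality $\frac{5n-8-\sqrt{9n^2-32n+32}}{2}\le 2n-5$ is equivalent to $(2n-7)(n-1)\ge 0$ and so genuinely requires $n\ge 4$; for $n=3$ the graph is $P_3$, whose distance signless Laplacian spectrum is $\{1,\ \frac{7-\sqrt{17}}{2},\ \frac{7+\sqrt{17}}{2}\}$ with least eigenvalue $1<\frac{7-\sqrt{17}}{2}$, so $S_{\mathcal{Q}}(P_3)=\frac{5+\sqrt{17}}{2}\neq\sqrt{17}$ --- part (ii) as stated in the paper is false at $n=3$, and your proof (correctly) covers only $n\ge 4$.
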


{\bf Remark 4.1}
\vskip0.3cm
\begin{center}
\begin{tabular}{|c|c|c|}
  \hline
  % after \\: \hline or \cline{col1-col2} \cline{col3-col4} ...
 graph   & Theorem \ref{thm401}                            & approximate   value  \\\hline
  $G_1$  & $S_{\mathcal{Q}}(G)\geq 15.6400$               &$S_{\mathcal{Q}}(G)\approx 18.6100$ \\\hline
  $G_2$  & $S_{\mathcal{Q}}(G)\geq 17.8520$        &$S_{\mathcal{Q}}(G)\approx 21.1870$ \\\hline
\end{tabular}
\\\vskip0.3cm  Table 4.1.
\end{center}

By computation with mathematica for graphs $G_1$ and $G_2$ (see Figs. 3.1, 3.2 and Table 4.1),
it seems that Theorem \ref{thm401} is useful to evaluate the signless Laplacian distance spread of a bipartite graph.

\vskip0.3cm
\begin{cor}
Let $G$ be a simple connected bipartite graph on $n\geq 3$ vertices,
%$W=W(G)$ is the wiener index of $G$,
$\Delta\leq n-2$ be maximum degree of $G$. Suppose that
 $deg(v_1)=deg(v_2)=\ldots=deg(v_k)=\Delta$
 for some $k$ $(1\leq k\leq n)$,
 $a_{i}, b_{i}$ are defined as Theorem \ref{thm401} for $1\leq i\leq k$.
 Then

\noindent {\rm (i) } $ \displaystyle q^{\mathcal{D}}(G)\geq \max\limits_{1\leq i\leq k}\frac{a_{i}+\sqrt{a_{i}^{2}+4b_{i}(1+\Delta)(n-\Delta-1)}}{2(1+\Delta)(n-\Delta-1)}$;

\noindent {\rm (ii)} $\displaystyle q^{\mathcal{D}}_{min}(G)\leq \min\limits_{1\leq i\leq k}\frac{a_{i}+\sqrt{a_{i}^{2}+4b_{i}(1+\Delta)(n-\Delta-1)}}{2(1+\Delta)(n-\Delta-1)}$.
\end{cor}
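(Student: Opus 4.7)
The plan is to recycle the construction used in Theorem \ref{thm401}. For each index $i$ with $1\leq i\leq k$, I would partition $V(G)$ into $N[v_i]$ and $V(G)\setminus N[v_i]$ and write $\mathcal{Q}(G)$ in the corresponding block form. Taking the average row sum of each of the four blocks (folding in the diagonal transmission matrix $Tr(G)$ along with $\mathcal{D}(G)$) produces a $2\times 2$ quotient matrix $B_i$, whose entries are precisely those already computed inside the proof of Theorem \ref{thm401}. Solving the resulting quadratic $P_{B_i}(\lambda)=0$ shows that the two eigenvalues of $B_i$ are
\begin{equation*}
\mu^{\pm}_i=\frac{a_i\pm\sqrt{a_i^{2}+4b_i(1+\Delta)(n-\Delta-1)}}{2(1+\Delta)(n-\Delta-1)}.
\end{equation*}

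Next I would invoke Lemma \ref{lem22}: since $B_i$ is the quotient matrix of the symmetric matrix $\mathcal{Q}(G)$, its two eigenvalues interlace the spectrum of $\mathcal{Q}(G)$. Interlacing with a $2\times 2$ quotient matrix immediately gives $q^{\mathcal{D}}(G)\geq \mu_i^{+}$ and $q^{\mathcal{D}}_{min}(G)\leq \mu_i^{-}$. Maximizing over $i\in\{1,\ldots,k\}$ in the first inequality yields (i), and minimizing in the second yields (ii). (The expression inside (ii) is written with a ``$+$'' under the radical sign in the statement; since the interlacing argument can only deliver an upper bound on $q^{\mathcal{D}}_{min}(G)$ through the smaller root $\mu_i^{-}$, I would read this as a minor typographical slip for ``$-$''.)

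The main difficulty is purely bookkeeping: one must track each of the four average block row sums of $\mathcal{Q}(G)$ in terms of the parameters $D_i$, $t_{v_i}$, $W$ (equivalently $S/2$) and $\Delta$, and then verify that the resulting quadratic in $\lambda$ matches the coefficients $a_i$, $b_i$ defined in Theorem \ref{thm401}. Because this calculation is already implicit in the proof of that theorem, the corollary itself requires no new ideas beyond substituting the two explicit roots of $P_{B_i}(\lambda)$ into the interlacing conclusion of Lemma \ref{lem22}.
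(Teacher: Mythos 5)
Your proposal coincides with the paper's own (implicit) argument: the corollary is obtained exactly as you describe, by taking the $2\times 2$ quotient matrix of $\mathcal{Q}(G)$ for the partition $\{N[v_i],\,V(G)\setminus N[v_i]\}$ already computed in the proof of Theorem \ref{thm401} and applying the interlacing of Lemma \ref{lem22} to its two roots. Your reading of part (ii) is also right: the bound the argument actually delivers is $q^{\mathcal{D}}_{min}(G)\leq \min_i\frac{a_i-\sqrt{a_i^{2}+4b_i(1+\Delta)(n-\Delta-1)}}{2(1+\Delta)(n-\Delta-1)}$, and the printed ``$+$'' before the radical (inherited from Corollary \ref{cor301}) is a typographical slip, as confirmed by the analogous corollaries later in Section 4 which use the smaller root for the least eigenvalue.
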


\begin{lem}\label{lem406}
Let $n\geq 4$ and $a$ be positive integers with $2a\leq n$. % $G$ be a  complete bipartite graph $K_{a,n-a}$.
Then $S_{\mathcal{Q}}(K_{a,n-a})\geq S_{\mathcal{Q}}(K_{\lfloor\frac{n}{2} \rfloor,\lceil\frac{n}{2} \rceil })$
with equality if and only if $G\cong K_{\lfloor\frac{n}{2} \rfloor,\lceil\frac{n}{2} \rceil }.$
\end{lem}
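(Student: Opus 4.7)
The plan is to use the spectrum formula (2.4) to express $S_{\mathcal{Q}}(K_{a,n-a})$ as an explicit function of $a$, and then show that this function is strictly decreasing on $\{1,2,\ldots,\lfloor n/2\rfloor\}$. By (2.4) the distance signless Laplacian eigenvalues of $K_{a,n-a}$ are $2n-a-4$ with multiplicity $n-a-1$, $n+a-4$ with multiplicity $a-1$, and $\theta_{\pm}=\tfrac12(5n-8\pm\sqrt{9n^{2}-32a(n-a)})$. Using $a\leq n-a$, the estimate $\theta_{+}-(2n-a-4)=\tfrac12(n+2a+\sqrt{\cdot})>0$ shows $q^{\mathcal{D}}(K_{a,n-a})=\theta_{+}$. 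For the least eigenvalue I would compare $\theta_{-}$ with $n+a-4$: this amounts to $3n-2a\geq\sqrt{9n^{2}-32a(n-a)}$, which squares down to $5n\geq 7a$, hence is strict under $2a\leq n$. Thus for $a\geq 2$ the eigenvalue $n+a-4$ genuinely appears and $q^{\mathcal{D}}_{\min}=n+a-4$; for $a=1$ its multiplicity collapses and a short check ($(2n-7)(n-1)>0$) gives $q^{\mathcal{D}}_{\min}=\theta_{-}$.

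This yields the closed forms
\[
F(a):=S_{\mathcal{Q}}(K_{a,n-a})=\frac{3n-2a+\sqrt{9n^{2}-32a(n-a)}}{2}\quad(a\geq 2),\qquad S_{\mathcal{Q}}(K_{1,n-1})=\sqrt{9n^{2}-32n+32}.
\]
Treating $F$ as a function of a real variable gives $F'(a)=-1+\dfrac{8(2a-n)}{\sqrt{9n^{2}-32a(n-a)}}<-1$ on $[0,n/2]$, so $F$ is strictly decreasing there. In particular $F(a)>F(\lfloor n/2\rfloor)=S_{\mathcal{Q}}(K_{\lfloor n/2\rfloor,\lceil n/2\rceil})$ for every integer $a$ with $2\leq a<\lfloor n/2\rfloor$, settling the range $a\geq 2$.

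The main obstacle is the outlier $a=1$, where the closed form is different and monotonicity does not apply directly. I would split on the parity of $n$. For even $n=2m$, $F(m)=3m$ and the required inequality $\sqrt{9n^{2}-32n+32}>3m$ reduces to $27n^{2}-128n+128>0$; the larger root of this quadratic lies below $4$, so the inequality holds for all $n\geq 4$. For odd $n=2m+1$, $F(m)=\tfrac12(4m+3+\sqrt{4m^{2}+4m+9})$ and the target inequality is $2\sqrt{36m^{2}-28m+9}>4m+3+\sqrt{4m^{2}+4m+9}$; I would first verify $2\sqrt{36m^{2}-28m+9}>4m+3$ (equivalent to $128m^{2}-136m+27>0$, immediate for $m\geq 1$), then isolate and square the remaining surd to reduce to a degree-$4$ polynomial inequality in $m$ that is routine to check for $m\geq 2$. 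Because the monotonicity step and the $a=1$ comparison are both strict, the minimum is attained uniquely at $a=\lfloor n/2\rfloor$, giving the equality characterization.
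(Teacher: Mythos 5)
Your proposal is correct and follows essentially the same route as the paper: extract the spectrum of $\mathcal{Q}(K_{a,n-a})$ from (2.4), identify $q^{\mathcal{D}}=\frac{5n-8+\sqrt{9n^{2}-32a(n-a)}}{2}$ and $q^{\mathcal{D}}_{\min}=n+a-4$ for $a\geq 2$ (with the $a=1$ outlier handled separately), and then use monotonicity in $a$ on $[2,\lfloor n/2\rfloor]$. The only cosmetic difference is that the paper settles $a=1$ by comparing $S_{\mathcal{Q}}(K_{1,n-1})$ with $S_{\mathcal{Q}}(K_{2,n-2})$ and chaining, while you compare it directly with $S_{\mathcal{Q}}(K_{\lfloor n/2\rfloor,\lceil n/2\rceil})$ via a parity split; both are routine computations.
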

\begin{proof}
By (\ref{eq24}), we have

$$\sigma(\mathcal{Q}(K_{a,n-a}))=\left\{(2n-a-4)^{[n-a-1]},(n+a-4)^{[a-1]}, \frac{5n-8 \pm \sqrt{9n^{2}-32a(n-a)}}{2}\ \right\}.$$

It is checked that
$$\frac{5n-8 +\sqrt{9n^{2}-32a(n-a)}}{2}>2n-a-4,\ \frac{5n-8-\sqrt{9n^{2}-32a(n-a)}}{2}>n+a-4.$$
Then $q^{D}(K_{a,n-a})=\frac{5n-8 + \sqrt{9n^{2}-32a(n-a)}}{2}$, and
 $$\displaystyle q^{D}_{min}(K_{a,n-a})=\left\{\begin{array}{cc}
                                n+a-4, & a>1 \\
                                \frac{5n-8-\sqrt{9n^{2}-32a(n-a)}}{2}, & a=1.
                              \end{array}\right.$$

 When $0<a\leq \frac{n}{2}$, it checked that $f(a)=\frac{5n-8 + \sqrt{9n^{2}-32a(n-a)}}{2}$ is a decreasing function with respect
 to $a$,
 $g(a)=n+a-4$ %$\frac{5n-8 + \sqrt{9n^{2}-32a(n-a)}}{2}$
 is a increasing function with respect
 to $a$.
 Then we have $S_{\mathcal{Q}}(K_{2,n-2})>S_{\mathcal{Q}}(K_{3,n-3})>\ldots> S_{\mathcal{Q}}(K_{\lfloor\frac{n}{2} \rfloor,\lceil\frac{n}{2} \rceil })$.

 Noting that $n\geq 4$, by directly computation, we have

 $S_{\mathcal{Q}}(K_{1,n-1})-S_{\mathcal{Q}}(K_{2,n-2})$

$\displaystyle=\sqrt{9n^{2}-32n+32}-(\frac{5n-8+\sqrt{9n^{2}-64n+128}}{2}-(n-2))>0$.

Thus $S_{\mathcal{Q}}(K_{1,n-1})>S_{\mathcal{Q}}(K_{2,n-2})>S_{\mathcal{Q}}(K_{3,n-3})>\ldots> S_{\mathcal{Q}}(K_{\lfloor\frac{n}{2} \rfloor,\lceil\frac{n}{2} \rceil })$. This completes the proof.
\end{proof}

Let $G$ be a simple connected bipartite graph on $n$ vertices.
 If $n=4$, $G$ is isomorphic to one of  the following three graphs: (1) $K_{2,2}$, (2) $P_4$, (3) $S_4$;
  if $n=5$, $G$ is isomorphic to one of  the following five graphs: (4) $K_{2,3}$, (5) $G_5$, (6) $G_6$, (7) $P_5$, (8) $S_5$ (see Fig. 4.1).
\setlength{\unitlength} {4mm}
\begin{center}
\begin{picture}(38,10)
\put(1,1){\circle* {0.1}}   \put(3,1){\circle* {0.1}}     \put(5,1){\circle* {0.1}}     \put(2,3){\circle* {0.1}}     \put(4,3){\circle* {0.1}}
\put(0,0.7){$v_{3}$}          \put(2,0.7){$v_{4}$}         \put(5.3,0.7){$v_{5}$}       \put(1,3){$v_{1}$}          \put(4.3,3){$v_{2}$}
\put(1,1){\line(1,2){1}}    \put(1,1){\line(3,2){3}}      \put(3,1){\line(-1,2){1}}      \put(3,1){\line(1,2){1}}       \put(5,1){\line(-3,2){3}}      \put(5,1){\line(-1,2){1}}
\put(2,-1){ (4) $K_{2,3}$}

\put(8,1){\circle* {0.1}}   \put(10,1){\circle* {0.1}}     \put(12,1){\circle* {0.1}}     \put(9,3){\circle* {0.1}}      \put(11,3){\circle* {0.1}}
\put(7,0.7){$v_{3}$}          \put(9,0.7){$v_{4}$}         \put(12.3,0.7){$v_{5}$}       \put(8,3){$v_{1}$}             \put(11.3,3){$v_{2}$}
\put(8,1){\line(1,2){1}}    \put(8,1){\line(3,2){3}}       \put(10,1){\line(-1,2){1}}      \put(10,1){\line(1,2){1}}       \put(12,1){\line(-3,2){3}}     \put(9,-1){ (5) $H_{1}$}

\put(15,1){\circle* {0.1}}   \put(17,1){\circle* {0.1}}     \put(19,1){\circle* {0.1}}     \put(16,3){\circle* {0.1}}      \put(18,3){\circle* {0.1}}
\put(14,0.7){$v_{3}$}          \put(16,0.7){$v_{4}$}         \put(19.3,0.7){$v_{5}$}       \put(15,3){$v_{1}$}             \put(18.3,3){$v_{2}$} \put(15,1){\line(1,2){1}}    \put(15,1){\line(3,2){3}}       \put(17,1){\line(-1,2){1}}          \put(19,1){\line(-3,2){3}}
\put(16,-1){ (6) $H_{2}$} \put(13,-3){ Fig. 4.1. $K_{2,2}$-$S_{5}$}

\put(22,1){\circle* {0.1}}   \put(24,1){\circle* {0.1}}     \put(26,1){\circle* {0.1}}     \put(23,3){\circle* {0.1}}      \put(25,3){\circle* {0.1}}
\put(21,0.7){$v_{3}$}          \put(23,0.7){$v_{4}$}         \put(26.3,0.7){$v_{5}$}       \put(22,3){$v_{1}$}             \put(25.3,3){$v_{2}$}
\put(22,1){\line(1,2){1}}     \put(24,1){\line(-1,2){1}}      \put(24,1){\line(1,2){1}}       \put(26,1){\line(-1,2){1}}
\put(23,-1){ (7) $P_{5}$}

\put(29,1){\circle* {0.1}}   \put(31,1){\circle* {0.1}}     \put(33,1){\circle* {0.1}}     \put(35,1){\circle* {0.1}}      \put(32,3){\circle* {0.1}}
\put(28,0.7){$v_{2}$}          \put(30,0.7){$v_{3}$}         \put(32,0.7){$v_{4}$}         \put(35,1){$v_{5}$}             \put(32,3){$v_{1}$}
\put(29,1){\line(3,2){3}}     \put(31,1){\line(1,2){1}}      \put(33,1){\line(-1,2){1}}       \put(35,1){\line(-3,2){3}}
\put(31,-1){ (8) $S_{5}$}

\put(5,7){\circle* {0.1}}   \put(7,7){\circle* {0.1}}     \put(5,9){\circle* {0.1}}     \put(7,9){\circle* {0.1}}
\put(4,6.7){$v_{3}$}          \put(7.3,6.7){$v_{4}$}       \put(4,8.7){$v_{1}$}          \put(7.3,8.7){$v_{2}$}
\put(5,7){\line(0,2){2}}    \put(5,7){\line(1,1){2}}      \put(7,7){\line(-1,1){2}}      \put(7,7){\line(0,2){2}}
\put(4,5){ (1) $K_{2,2}$}

\put(14,7){\circle* {0.1}}   \put(16,7){\circle* {0.1}}     \put(14,9){\circle* {0.1}}     \put(16,9){\circle* {0.1}}
\put(13,6.7){$v_{3}$}          \put(16.3,6.7){$v_{4}$}       \put(13,8.7){$v_{1}$}          \put(16.3,8.7){$v_{2}$}
\put(14,7){\line(0,2){2}}    \put(14,7){\line(1,1){2}}      \put(16,7){\line(-1,1){2}}
\put(14,5){ (2) $P_{4}$}

\put(23,7){\circle* {0.1}}   \put(25,7){\circle* {0.1}}     \put(27,7){\circle* {0.1}}     \put(25,9){\circle* {0.1}}
\put(22,6.7){$v_{2}$}          \put(24,6.7){$v_{3}$}       \put(26,6,7){$v_{4}$}          \put(25.3,8.7){$v_{1}$}
\put(23,7){\line(1,1){2}}    \put(25,7){\line(0,2){2}}      \put(27,7){\line(-1,1){2}}
\put(24,5){ (3) $S_{4}$}
\end{picture}
\end{center}
\vskip 1cm

By direct calculation, we obtain the following two tables.

\begin{center}
\vskip 0.5cm \begin{tabular}{|c|c|c|c|}
  \hline
  % after \\: \hline or \cline{col1-col2} \cline{col3-col4} ...
   $G$      & $q^{\mathcal{D}}$  &  $q_{min}^{\mathcal{D}}$    &      $S_{\mathcal{Q}}(G)$  \\ \hline
  $K_{2,2}$      & 8                  & 2               &6              \\ \hline
  $P_{4}$       & $10.6056$        & 2               &$8.6056$    \\ \hline
  $S_{4}$        & $9.4641$        & $2.5359$     &$6.9282$     \\ \hline
\end{tabular}

\vskip.2cm Table 4.1
\end{center}

\begin{center}
\vskip 0.2cm \begin{tabular}{|c|c|c|c|}
  \hline
  % after \\: \hline or \cline{col1-col2} \cline{col3-col4} ...
   $G$      & $q^{\mathcal{D}}$  &  $q_{min}^{\mathcal{D}}$    &      $S_{\mathcal{Q}}(G)$  \\ \hline
  $K_{2,3}$        & $11.3723$        & 3         &$8.3723$  \\ \hline
 $H_{1}$        & $13.3441$        & 3.3113         &$10.0328$  \\ \hline
  $H_{2}$        & $15.3119$        & 3.6075         &$11.7044$  \\ \hline
  $P_{5}$        & $17.1152$        & 3.4385         &$13.6767$  \\ \hline
  $S_{5}$        & $13.4244$        & 3.5756         &$9.8488$   \\ \hline
\end{tabular}

 \vskip.2cm Table 4.2
\end{center}

Combining Lemma \ref{lem406} and the results in Table 4.1, we get the following corollary.

\begin{cor}\label{cor4060}
For positive integers $n$ and $a$ with $2a\leq n$,
$S_{\mathcal{Q}}(K_{a,n-a})\geq S_{\mathcal{Q}}(K_{\lfloor\frac{n}{2} \rfloor,\lceil\frac{n}{2} \rceil })$
with equality if and only if $G\cong K_{\lfloor\frac{n}{2} \rfloor,\lceil\frac{n}{2} \rceil }.$
\end{cor}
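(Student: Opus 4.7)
The plan is to observe that Corollary~\ref{cor4060} is really just Lemma~\ref{lem406} with the hypothesis $n \geq 4$ dropped, so the substantive work has already been done. First I would simply invoke Lemma~\ref{lem406} to handle every $n \geq 4$; this immediately gives both the inequality and the equality characterization claimed in the corollary.

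It then remains only to treat the residual cases $n \in \{2,3\}$ (the case $n=1$ is vacuous, since $2a \leq 1$ has no positive integer solution). In both of these cases the hypothesis $2a \leq n$ with $a \geq 1$ forces $a = 1$, so the only admissible graph is $K_{1,n-1}$. A direct inspection shows that this graph already coincides with the balanced complete bipartite graph: for $n=2$ both sides are $K_{1,1}$, and for $n=3$ both sides are $K_{1,2}$, using $\lfloor 2/2\rfloor = \lceil 2/2\rceil = 1$ and $\lfloor 3/2\rfloor = 1,\ \lceil 3/2\rceil = 2$. Hence the inequality degenerates to an equality and the equality case is automatic, completing the verification in these small cases.

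There is no real obstacle here: once Lemma~\ref{lem406} is in hand, the only thing to be careful about is the off-by-one check of the floor/ceiling identifications at $n = 2,3$. The reference to Table~4.1 in the sentence preceding the corollary plays a purely confirmatory role, serving as a numerical sanity check in the smallest nontrivial case $n=4$ that $S_{\mathcal{Q}}(K_{2,2}) = 6 < S_{\mathcal{Q}}(K_{1,3}) = S_{\mathcal{Q}}(S_4)$, in agreement with the strict inequality produced by Lemma~\ref{lem406} when $a \neq \lfloor n/2\rfloor$.
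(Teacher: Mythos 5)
Your proposal is correct and matches the paper's own (one-line) justification, which simply combines Lemma~\ref{lem406} with the data in Table~4.1; your explicit handling of the degenerate cases $n\in\{1,2,3\}$, where the hypothesis $2a\leq n$ forces $a=1$ and $K_{1,n-1}$ already equals $K_{\lfloor n/2\rfloor,\lceil n/2\rceil}$, is in fact more careful than what the paper writes. Your reading of Table~4.1 as a purely confirmatory check at $n=4$ is also consistent with the paper, since Lemma~\ref{lem406} already covers all $n\geq 4$.
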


Comparing the results in Tables 4.1 and 4.2, and checking more graphs with computer, it seems that among bipartite graphs, $S_{\mathcal{Q}}(K_{\lfloor\frac{n}{2} \rfloor,\lceil\frac{n}{2} \rceil })$ always has the minimum $S_{\mathcal{Q}}$. Thus, we propose the following problem for further research.
\begin{con}\label{con405}
  Let $G$ be a bipartite graph with $n$ vertices. Then
  $ S_{\mathcal{Q}}(G)\geq S_{\mathcal{Q}}(K_{\lfloor\frac{n}{2} \rfloor,\lceil\frac{n}{2} \rceil })$
with equality if and only if $G\cong K_{\lfloor\frac{n}{2} \rfloor,\lceil\frac{n}{2} \rceil }.$
\end{con}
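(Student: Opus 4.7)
The plan is a two-stage reduction. The target intermediate inequality is
\[ S_{\mathcal{Q}}(G) \geq S_{\mathcal{Q}}(K_{a,b}) \]
for every connected bipartite graph $G$ with its (unique) bipartition $V(G) = A \sqcup B$, $|A| = a \leq b = |B|$; chaining this with Corollary \ref{cor4060} then gives the desired bound $S_{\mathcal{Q}}(G) \geq S_{\mathcal{Q}}(K_{\lfloor n/2 \rfloor, \lceil n/2 \rceil})$, and tracking equality through both links forces $G \cong K_{\lfloor n/2 \rfloor, \lceil n/2 \rceil}$. The case $a = 1$ is immediate since it forces $G = K_{1,n-1}$, so the real work is in the range $2 \leq a \leq b$.

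For the lower bound on $q^{\mathcal{D}}(G)$, I would exploit that $G$ is a spanning subgraph of $K_{a,b}$: every same-part distance in $G$ is $\geq 2$ and every cross-part distance is $\geq 1$, giving $\mathcal{Q}(G) \geq \mathcal{Q}(K_{a,b})$ entrywise. Taking the Perron eigenvector $y$ of $\mathcal{Q}(K_{a,b})$, which by the bipartite symmetry has the form $y = \alpha \chi_{A} + \beta \chi_{B}$ with $\alpha, \beta > 0$, as a Rayleigh test vector yields the quantitative estimate
\[ q^{\mathcal{D}}(G) - q^{\mathcal{D}}(K_{a,b}) \geq \frac{y^{T}[\mathcal{Q}(G) - \mathcal{Q}(K_{a,b})] y}{\|y\|^{2}}, \]
in which the right-hand side is controlled by the Wiener sub-indices $W_{A}, W_{B}, W_{AB}$ of $G$. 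For an upper bound on $q^{\mathcal{D}}_{min}(G)$, I would combine two tools: interlacing via the $2\times 2$ quotient matrix
\[ B(G) = \begin{pmatrix} (4W_{A} + W_{AB})/a & W_{AB}/a \\ W_{AB}/b & (4W_{B} + W_{AB})/b \end{pmatrix}, \]
which by Lemma \ref{lem22} gives $q^{\mathcal{D}}_{min}(G) \leq \lambda_{2}(B(G))$, and the elementary Rayleigh bound obtained from the antisymmetric test vector $e_{i} - e_{j}$ on a same-part pair, namely $q^{\mathcal{D}}_{min}(G) \leq (D_{i} + D_{j})/2 - d_{G}(v_{i}, v_{j})$.

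The main obstacle is that $q^{\mathcal{D}}_{min}$ is \emph{not} monotone under edge deletion within the bipartite class: comparing Table 4.2 with the formula (\ref{eq24}) gives $q^{\mathcal{D}}_{min}(P_{5}) \approx 3.44 > 3 = q^{\mathcal{D}}_{min}(K_{2,3})$, so a direct argument $q^{\mathcal{D}}_{min}(G) \leq q^{\mathcal{D}}_{min}(K_{a,b})$ is not available. The core of the proof must therefore be to show that the excess in $q^{\mathcal{D}}$ strictly dominates the excess in $q^{\mathcal{D}}_{min}$ as $G$ becomes sparser. A natural vehicle is an edge-addition induction: add any missing edge of $K_{a,b}$ to $G$, prove $S_{\mathcal{Q}}$ weakly decreases, and iterate to reach $K_{a,b}$. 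Writing $\mathcal{Q}(G) = \mathcal{Q}(G+e) + \Delta$ with $\Delta$ entrywise non-negative but generally not PSD, the naive Weyl estimate delivers only $S_{\mathcal{Q}}(G) - S_{\mathcal{Q}}(G+e) \geq -S(\Delta)$, which is too weak; the improvement must come from exploiting positivity of the Perron eigenvector of $\mathcal{Q}(G+e)$ to pin the growth of $q^{\mathcal{D}}$ down from below, coupled with a careful sign analysis of the bottom eigenvector to cap the growth of $q^{\mathcal{D}}_{min}$. This balancing act is where the principal technical difficulty lies.
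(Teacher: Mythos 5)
This statement is not proved in the paper at all: it is stated as Conjecture \ref{con405} and explicitly proposed ``for further research,'' with Remark 4.2 sketching exactly the two-stage reduction you describe (first show $S_{\mathcal{Q}}(G)\geq S_{\mathcal{Q}}(K_{a,n-a})$ for the bipartition sizes of $G$, then apply Lemma \ref{lem406}). So there is no proof of the authors' to compare against, and your proposal does not close the gap either. What you have written is a strategy, not a proof: the second stage (chaining through Corollary \ref{cor4060}) is fine, and your preparatory observations are correct --- $\mathcal{Q}(G)\geq \mathcal{Q}(K_{a,b})$ entrywise does give $q^{\mathcal{D}}(G)\geq q^{\mathcal{D}}(K_{a,b})$ by Perron--Frobenius, the quotient matrix $B(G)$ you display is the right equitable-average computation, and the observation that $q^{\mathcal{D}}_{min}$ is \emph{not} monotone under edge deletion (e.g.\ $q^{\mathcal{D}}_{min}(P_5)\approx 3.44>3=q^{\mathcal{D}}_{min}(K_{2,3})$ from Table 4.2) correctly identifies why the naive argument fails. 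But the first stage, the inequality $S_{\mathcal{Q}}(G)\geq S_{\mathcal{Q}}(K_{a,b})$, is never established: you name the difficulty (showing the growth of $q^{\mathcal{D}}$ under edge deletion dominates the growth of $q^{\mathcal{D}}_{min}$) and then stop, conceding that ``this balancing act is where the principal technical difficulty lies.'' That balancing act \emph{is} the conjecture, up to the easy reduction; without it nothing has been proved.

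Concretely, the proposed edge-addition induction has no mechanism to succeed as stated. The Rayleigh estimate with the Perron vector of $\mathcal{Q}(K_{a,b})$ bounds $q^{\mathcal{D}}(G)-q^{\mathcal{D}}(K_{a,b})$ from below by a quantity involving $W_A$, $W_B$, $W_{AB}$, and the quotient-matrix interlacing bounds $q^{\mathcal{D}}_{min}(G)\leq\lambda_2(B(G))$ from above by another such quantity, but you give no argument that the first exceeds the second minus $q^{\mathcal{D}}_{min}(K_{a,b})$; indeed $\lambda_2(B(G))$ involves the same Wiener sub-indices in a way that does not obviously cooperate, and the single-step comparison $S_{\mathcal{Q}}(G)\geq S_{\mathcal{Q}}(G+e)$ may well require global information about $G$ rather than local information about the added edge. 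The equality analysis is likewise unaddressed. In short, the proposal reproduces the authors' own suggested roadmap for an open problem and leaves the essential step open.
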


{\bf Remark 4.2}
In order to prove Conjecture \ref{con405}, maybe it is better to show $S_{\mathcal{Q}}(G)\geq S_{\mathcal{Q}}(K_{a,n-a})$ holding for some $a$ at first, and then to using Lemma \ref{lem406} to get the desired result.

%%%%%%%%%%%%%%%%%%%%%%%%%%%%%%%%%%%%%%%%%%%%%%%%%%%%%%%%%%%%%%%%%%%%%%%%%%%%%%%%%%%%%%%%%%%%%%%%%%%%%%%%%%%%%%%%%%%%%%%%%%%%%%%%%%%%%%%%%%%%%%%%%%5
\subsection{Bound on $S_{\mathcal{Q}}$ with clique number}
\hskip 0.6cm A $clique$ of a graph $G$ is a subgraph in which any pair of vertices is adjacent,
and the clique number $\omega(G)$ (simply, $\omega$) is the number of vertices of the largest clique in $G$.
In this subsection, we present a lower bound on $S_{\mathcal{Q}}$ with clique number.
%obtain  bounds on distance signless Laplacian spectral radii $q^{\mathcal{D}}(G)$ and the least distance  signless Laplacian eigenvalue $q^{\mathcal{D}}_{n}(G)$.

\begin{them}\label{thm408}
 Let $G$ be a simple connected graph with $n$ vertices, clique number $\omega\geq 2$ and Wiener index $W$.
Suppose that $G_{1}$, $G_{2}$, $\ldots$, $G_{k}$ are all the cliques with order $\omega$,
$s_i=\sum\limits_{v_{j}\in V(G_{i})}D_{j}$ for $ 1\leq i \leq k$. Then

\noindent {\rm(i) } if $\omega=n$, then $S_{\mathcal{Q}}(G)=n$;

\noindent {\rm(ii) } if  $2\leq \omega \leq n-1$, then
\begin{equation}\label{eq402}
S_{\mathcal{Q}}(G)\geq \max\limits_{1\leq i\leq k}{\frac{\sqrt{a_{i}^{2}-4 b_{i}(n-\omega)\omega }}{(n-\omega)\omega}},\end{equation}

\noindent where  $a_{i}=n \omega(1-\omega)+4\omega(s_i-W)-n s_i$ and $b_{i}=4W\omega(\omega-1)+4s_i(W-s_i)$.
\end{them}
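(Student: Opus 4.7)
\noindent\textbf{Proof proposal for Theorem \ref{thm408}.}

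The plan is to mimic the partition/interlacing technique used in the proof of Theorem \ref{thm302}, but with the vertex set split according to a maximum clique rather than a closed neighborhood; the key simplification is that all pairwise distances inside a clique equal $1$, so the block of $\mathcal{Q}(G)$ indexed by $V(G_i)$ becomes very transparent.

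For part (i), with $\omega=n$ the graph $G$ is $K_n$. A direct calculation gives $\mathcal{Q}(K_n)=(n-2)I_n+J_n$, whose spectrum is $\{2(n-1),\,(n-2)^{[n-1]}\}$, so $S_{\mathcal{Q}}(K_n)=2(n-1)-(n-2)=n$, as claimed. (Alternatively this can be read off from Lemma \ref{lem24} applied to the single-block partition.)

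For part (ii), fix some $i$ and partition $V(G)=V(G_i)\cup (V(G)\setminus V(G_i))$ with $|V(G_i)|=\omega$ and $|V(G)\setminus V(G_i)|=n-\omega$. Writing $\mathcal{Q}(G)$ in block form relative to this partition, I would compute the four average row sums that form the quotient matrix $B$ of $\mathcal{Q}(G)$. Since every pair in $V(G_i)$ is at distance $1$, the row of $\mathcal{Q}(G)$ at $v_j\in V(G_i)$ contributes $D_j+(\omega-1)$ inside $V(G_i)$ and $D_j-(\omega-1)$ outside (using that the total row sum is $2D_j$). Summing over $v_j\in V(G_i)$ yields the block-row totals $s_i+\omega(\omega-1)$ and $s_i-\omega(\omega-1)$. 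For the block indexed by $V(G)\setminus V(G_i)$, the total row sum $\sum_{k\notin V(G_i)}2D_k=4W-2s_i$ minus the cross-term $s_i-\omega(\omega-1)$ gives the block sum $4W-3s_i+\omega(\omega-1)$. Dividing by the appropriate block sizes gives
\[
B=\begin{pmatrix} \dfrac{s_i+\omega(\omega-1)}{\omega} & \dfrac{s_i-\omega(\omega-1)}{\omega} \\[4pt] \dfrac{s_i-\omega(\omega-1)}{n-\omega} & \dfrac{4W-3s_i+\omega(\omega-1)}{n-\omega}\end{pmatrix}.
\]

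The spread of this $2\times 2$ matrix is $\lambda_1(B)-\lambda_2(B)=\sqrt{(b_{11}-b_{22})^2+4b_{12}b_{21}}$, and applying Lemma \ref{lem22} (interlacing of $B$ and $\mathcal{Q}(G)$) yields $S_{\mathcal{Q}}(G)\geq \lambda_1(B)-\lambda_2(B)$. Maximizing over $1\leq i\leq k$ is then immediate. The main technical obstacle is purely algebraic: one must check that after clearing the common denominator $\omega(n-\omega)$, the discriminant $[(b_{11}-b_{22})\omega(n-\omega)]^2+4\omega(n-\omega)\cdot b_{12}b_{21}\omega(n-\omega)$ collapses precisely to $a_i^{2}-4b_i\,\omega(n-\omega)$ with the stated $a_i$ and $b_i$. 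Using the abbreviations $A=s_i+\omega(\omega-1)$, $C=4W-3s_i+\omega(\omega-1)$ and $B_0=s_i-\omega(\omega-1)$, this reduces to verifying the two identities $A(n-\omega)+C\omega=-a_i$ and $AC-B_0^{2}=b_i$; both are routine expansions (the second one simplifies nicely because the cross terms $\pm 2s_i\omega(\omega-1)$ cancel), and they are the only nontrivial computation in the proof.
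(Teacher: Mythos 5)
Your proposal is correct and follows essentially the same route as the paper: partition $V(G)$ into $V(G_i)$ and its complement, form the quotient matrix of $\mathcal{Q}(G)$ (your $B$ agrees entry-by-entry with the paper's, using that intra-clique distances are all $1$), and apply Lemma \ref{lem22}; your identities $A(n-\omega)+C\omega=-a_i$ and $AC-B_0^2=b_i$ check out and give exactly the bound \eqref{eq402}. You are in fact slightly more explicit than the paper, which leaves the computation of $P_B(\lambda)$ and the discriminant to the reader.
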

\begin{proof}
(i). If $\omega=n$, then $G \cong K_{n}$. By direct calculation, we have $q^{\mathcal{D}}(G)=2n-2$ and $q_{min}^{\mathcal{D}}(G)=n-2$. Thus $S_{\mathcal{D}}(G)=q^{\mathcal{D}}(G)- q_{n}^{\mathcal{D}}(G)=n$.

(ii). If $\omega \leq n-1$, for $ 1\leq i \leq k$, suppose $V(G_{i})=\{v_{i1}$, $v_{i2}$, $\ldots$, $v_{i\omega}\}$. Then $V(G)$ is divided into two parts $V(G_{i})$ and $V(G)\setminus V(G_{i})$. Corresponding to this partition, the quotient matrix of $\mathcal{Q}(G)$ is written as

\[B= \begin{pmatrix}
   \frac{\omega(\omega-1)+s_i}{\omega} & \frac{s_i-\omega(\omega-1)}{\omega} \\
    \frac{s_i-\omega(\omega-1)}{n-\omega } & \frac{4W-3s_i+\omega(\omega-1)}{n-\omega}
  \end{pmatrix}.\]

Similar to the proof of Theorem \ref{thm302}, solving $P_{B}(\lambda)=0$ and using Lemma \ref{lem22} get (\ref{eq402}).
 % we have $S_{\mathcal{Q}}(G)\geq \max\limits_{1\leq i\leq k}{\frac{\sqrt{a_{i}^{2}-4 b_{i}(n-\omega)\omega }}{(n-\omega)\omega}}$.
\end{proof}

{\bf Remark 4.3} Recall that a kite $Ki_{n,\omega}$ is the graph obtained from a clique $K_\omega$ and a path $P_{n-\omega}$ by adding an edge between an endpoint of the path and a vertex of the clique.
 For a kite $G=Ki_{5,3}$, by Theorem \ref{thm408}, we have $S_{\mathcal{Q}}(G)\geq 10.6158$.
 On the other hand, by  direct calculation, we obtain $S_{\mathcal{Q}}(G)\approx 11.3395$.
 This shows that Theorem \ref{thm408} is useful to evaluate the distance signless Laplacian spread of a graph with given clique number.

%\begin{exam}
%Let  $G=Ki_{n,n-1}$, by Theorem \ref{thm408}, we have $$S_{\mathcal{Q}}(G)\geq \frac{\sqrt{16n^{3}-63n^{2}+80n-32}}{n-1}.$$
% On the other hand, by Lemma \ref{lem24} and direct calculation, we have $S_{\mathcal{Q}}(G)=\frac{4n-5+\sqrt{16n-31}}{2}-n+2$.
%Using mathematica, we compare the above two number by the following examples:

%\begin{center}
%\begin{tabular}{|c|c|c|c|c|}\hline% after \\: \hline or \cline{col1-col2} \cline{col3-col4} ...
%   $n$                                                    &  10       &  100    &1000      &10000 \\ \hline
 % $\frac{\sqrt{16n^{3}-63n^{2}+80n-32}}{n-1}$             &   ?       & ?       &?           &?    \\ \hline
%  $S_{\mathcal{Q}}(G)=\frac{4n-5+\sqrt{16n-31}}{2}-n+2$   &   ?       & ?       &?          &?   \\ \hline
%\end{tabular}
%\vskip.2cm Table 5
%\end{center}
%From these examples, we can see   that Theorem \ref{thm408} can be useful to evaluate the distance signless Laplacian spread of a graph given clique number.
% \end{exam}

% \begin{exam}
%Let  $G=Ki_{n,n-1}$, by Theorem \ref{thm408}, we have $$S_{\mathcal{Q}}(G)\geq \frac{\sqrt{16n^{3}-63n^{2}+80n-32}}{n-1}.$$
% On the other hand, by Lemma \ref{lem24} and direct calculation, we have $S_{\mathcal{Q}}(G)=\frac{4n-5+\sqrt{16n-31}}{2}-n+2$.
%Since $\frac{4n-5+\sqrt{16n-31}}{2}-n+2 > \frac{\sqrt{16n^{3}-63n^{2}+80n-32}}{n-1}$, we can see  that Theorem \ref{thm408} can be useful to evaluate the distance signless Laplacian spread of a graph given clique number.
% \end{exam}

By Lemma \ref{lem22} and Theorem \ref{thm408}, we have

\begin{cor}\label{cor409}
Let $G$ be a simple connected graph with $n$ vertices and clique number $\omega$. Suppose that $G_{1}, G_{2},...,G_{k}$ are all the
cliques with order $\omega$, %Let $D_{i}$ be the transmission of vertex $v_{i}$ in $G$, $W=W(G)$ is the wiener index of $G$,
% $s_i=\sum\limits_{v_{j}\in V(G_{i})}D_{j}$,
 $a_i$, $b_i$ are defined as  Theorem \ref{thm408} for $ 1\leq i \leq k$. Then

\noindent {\rm(i) } $q^{\mathcal{D}}(G)\geq \max\limits_{1\leq i\leq k}\{\frac{-a_{i}+\sqrt{a_{i}^{2}-4 b_{i}(n-\omega)\omega }}{2(n-\omega)\omega }\}$;

\noindent {\rm(ii) }  $q^{\mathcal{D}}_{min}(G) \leq \min\limits_{1\leq i\leq k}\{\frac{-a_{i}-\sqrt{a_{i}^{2}-4 b_{i}(n-\omega)\omega }}{2(n-\omega)\omega }\}$.
\end{cor}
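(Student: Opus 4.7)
The plan is to piggy-back directly on the proof of Theorem \ref{thm408}, since the same quotient matrix already encodes both the largest and smallest eigenvalue information. Fix any clique $G_i$ of order $\omega$, and reuse the partition $V(G)=V(G_i)\cup (V(G)\setminus V(G_i))$ together with the $2\times 2$ quotient matrix
\[
B = \begin{pmatrix} \frac{\omega(\omega-1)+s_i}{\omega} & \frac{s_i-\omega(\omega-1)}{\omega} \\[2pt] \frac{s_i-\omega(\omega-1)}{n-\omega} & \frac{4W-3s_i+\omega(\omega-1)}{n-\omega} \end{pmatrix}
\]
computed there. Solving $P_B(\lambda)=0$ already produced the coefficients $a_i,b_i$ in Theorem \ref{thm408}, and the two roots of the resulting quadratic are
\[
\lambda_{\pm}(B) \;=\; \frac{-a_i\pm\sqrt{a_i^{2}-4b_i(n-\omega)\omega}}{2(n-\omega)\omega}.
\]
So the only thing left is to keep these two roots separately (rather than just taking their difference, as Theorem \ref{thm408} does).

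Next I would invoke Lemma \ref{lem22}: the eigenvalues of $B$ interlace those of $\mathcal{Q}(G)$. Since $B$ is $2\times 2$, interlacing says $\lambda_{+}(B)\le q^{\mathcal{D}}(G)$ and $\lambda_{-}(B)\ge q^{\mathcal{D}}_{min}(G)$. Substituting the explicit formulas gives
\[
q^{\mathcal{D}}(G)\;\ge\;\frac{-a_i+\sqrt{a_i^{2}-4b_i(n-\omega)\omega}}{2(n-\omega)\omega},\qquad q^{\mathcal{D}}_{min}(G)\;\le\;\frac{-a_i-\sqrt{a_i^{2}-4b_i(n-\omega)\omega}}{2(n-\omega)\omega}.
\]

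Finally, since the argument works for every maximum clique $G_1,\ldots,G_k$, I would take the maximum over $i$ in the lower bound and the minimum over $i$ in the upper bound to obtain (i) and (ii), respectively.

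There is no real obstacle here: the work was already done in the proof of Theorem \ref{thm408}; one only has to resist collapsing $\lambda_{+}(B)$ and $\lambda_{-}(B)$ into their difference and instead read off each root separately before applying interlacing. The one sanity check worth mentioning is that $a_i^{2}-4b_i(n-\omega)\omega\ge 0$, which is automatic because $B$ is the quotient of a real symmetric matrix and is therefore similar to a symmetric matrix, so its spectrum is real.
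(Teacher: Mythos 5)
Your proposal is correct and matches the paper's own (very terse) justification: the paper simply cites Lemma \ref{lem22} together with the quotient matrix already computed in the proof of Theorem \ref{thm408}, which is exactly the route you take. Reading off the two roots $\lambda_{\pm}(B)$ separately and applying interlacing before maximizing/minimizing over $i$ is precisely what is intended.
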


%%%%%%%%%%%%%%%%%%%%%%%%%%%%%%%%%%%%%%%%%%%%%%%%%%%%%%%%%%%%%%%%%%%%%%%%%%%%%%%%%%%%%%%%%%%%%%%%%%%%%%%%%%%%%%%%%%%%%%%%%%%%%%%%%%%%%%%%%%%%%%%%%%5
\subsection{Bound on $S_{\mathcal{Q}}$ with diameter}

\hskip 0.6cm In this subsection, we obtain a lower  bound on $S_{\mathcal{Q}}$ of a graph with diameter. In a graph, a path is called a diameter path if its length is equal to the diameter of this graph.

\begin{them}\label{thm411}
 Let $G$ be a simple connected graph with $n$ vertices, diameter $d$ and
Wiener index $W$.
Suppose that $P_{1}$, $P_{2}$, $\ldots$, $P_{k}$ are all the diameter paths,
and suppose that $s_i=\sum\limits_{v_{j}\in V(P_{i})}D_{j}$ for $ 1\leq i \leq k$. Then

\noindent {\rm (i)} if $d=1$, then $S_{\mathcal{Q}}(G)=n$;

\noindent {\rm (ii)} if $2\leq d$, then
\begin{equation}\label{eq403}
S_{\mathcal{Q}}(G)\geq \max\limits_{1\leq i\leq k}{\frac{\sqrt{a_{i}^{2}-12 b_{i}(d+1)(n-1-d)}}{3(d+1)(n-1-d)}}, \end{equation}

\noindent where $a_{i}=12(1+d)(s_i-W)-nd(d+1)(d+2)-3ns_i$ and $b_{i}=4d(d+1)(d+2)W+12s_i(W-s_i)$.
\end{them}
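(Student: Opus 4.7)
The plan is to mimic the block-partition strategy used in the proofs of Theorems \ref{thm302} and \ref{thm408}, this time partitioning $V(G)$ according to a diameter path rather than a closed neighborhood or a maximum clique. The interlacing Lemma \ref{lem22} then guarantees that the spread of the resulting $2\times 2$ quotient matrix lower-bounds $S_{\mathcal{Q}}(G)$, and taking the maximum over all diameter paths yields (\ref{eq403}).

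For part (i), I would observe that $d=1$ forces $G\cong K_n$; a direct calculation gives $\mathcal{Q}(K_n)=(n-2)I_n+J_n$, whose spectrum is $\{2n-2,(n-2)^{[n-1]}\}$, so $S_{\mathcal{Q}}(G)=n$ immediately.

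For part (ii), I would fix a diameter path $P_i$ of length $d$ and split $V(G)=V(P_i)\cup(V(G)\setminus V(P_i))$, noting $|V(P_i)|=d+1$. The key combinatorial input is that every sub-path of a geodesic is itself a geodesic, so for the vertices at positions $j,k$ on $P_i$ one has $d_G(v_j,v_k)=|j-k|$, and hence
\[
\sum_{\{u,v\}\subseteq V(P_i)}d_G(u,v)=\sum_{0\le j<k\le d}(k-j)=\frac{d(d+1)(d+2)}{6}.
\]
Setting $T=\frac{d(d+1)(d+2)}{3}$ (twice this sum), I would compute the block sums of $\mathcal{Q}(G)=Tr(G)+\mathcal{D}(G)$: the $V(P_i)\times V(P_i)$ block contributes $s_i+T$ (transmissions on the diagonal plus doubled within-path distances), each off-diagonal block contributes $s_i-T$ (each transmission in $P_i$ minus its within-path part), and the remaining diagonal block contributes $4W-3s_i+T$ (using $\sum_j D_j=2W$ and that the total entry sum of $\mathcal{Q}(G)$ equals $4W$). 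Dividing by the row counts $d+1$ or $n-1-d$ produces the quotient matrix
\[
B=\begin{pmatrix}\dfrac{s_i+T}{d+1} & \dfrac{s_i-T}{d+1}\\[6pt] \dfrac{s_i-T}{n-1-d} & \dfrac{4W-3s_i+T}{n-1-d}\end{pmatrix}.
\]
Using $3T=d(d+1)(d+2)$, the next task is to verify the identities
\[
3(d+1)(n-1-d)\,\mathrm{tr}(B)=-a_i,\qquad 3(d+1)(n-1-d)\,\det(B)=b_i.
\]
Once these are in hand, the eigenvalues of $B$ are $\lambda_{\pm}=\frac{-a_i\pm\sqrt{a_i^{2}-12b_i(d+1)(n-1-d)}}{6(d+1)(n-1-d)}$, their difference is exactly the right-hand side of (\ref{eq403}), and Lemma \ref{lem22} supplies $S_{\mathcal{Q}}(G)\ge\lambda_+-\lambda_-$. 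Maximizing over $i$ finishes the proof.

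The main obstacle is the algebraic bookkeeping in the final step: one must carefully track the doubling of off-diagonal distances within the $V(P_i)$-block, the identity $\sum_j D_j=2W$ governing the complementary block, and the factor $\tfrac{1}{3}$ arising from $T=\tfrac{d(d+1)(d+2)}{3}$, so that $\mathrm{tr}(B)$ and $\det(B)$ collapse exactly to the stated forms of $a_i$ and $b_i$. No additional graph-theoretic ingredient beyond the geodesic-of-a-geodesic observation is required; the remainder is a mechanical computation analogous to the one used for Theorem \ref{thm408}.
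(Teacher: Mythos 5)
Your proposal is correct and follows essentially the same route as the paper: partition $V(G)$ into $V(P_i)$ and its complement, use the within-path distance sum $T=\tfrac{1}{3}d(d+1)(d+2)$ to form the same $2\times 2$ quotient matrix, and apply Lemma \ref{lem22}; your closed-form evaluation $\sum_{0\le j<k\le d}(k-j)=\tfrac{d(d+1)(d+2)}{6}$ is just a tidier version of the paper's even/odd case computation of $T$, and the trace/determinant identities you propose to verify do check out.
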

\begin{proof}
(i). If $d=1$, then $G\cong K_{n}$. By direct calculation, $q^{\mathcal{D}}(G)=2n-2, q_{min}^{\mathcal{D}}(G)=n-2$. Thus,  $S_{\mathcal{D}}(G)=n$.

(ii). If $2\leq d$, for $ 1\leq i \leq k$, we let
$T=\sum \limits_{v_{s},v_{j}\in V(P_{i})} d_{sj}$. Then when $d$ is even, we have

$T=2(1+2+...+d)+2[1+1+2+3+...+(d-1)]+\ldots$

             \hskip0.4cm $+2[1+1+2+2+...+(\frac{d}{2}-1)+(\frac{d}{2}-1)+\frac{d}{2}+(\frac{d}{2}+1)]$

             \hskip0.4cm $+2[1+2+...+(\frac{d}{2}-1)+\frac{d}{2}],$

\hskip0.4cm $=d(d+1)+[(d-1)d+2\times 1]+[(d-2)(d-1)+2\times 3]+\ldots$

\hskip0.4cm $+[(\frac{d}{2}+1)(\frac{d}{2}+2)+(\frac{d}{2}-1)\frac{d}{2}]+\frac{d}{2}(\frac{d}{2}+1)$

\hskip0.4cm $=1^{2}+2^{2}+3^{2}+...+d^{2}+1+2+3+...+d$

\hskip0.4cm $=\frac{d(d+1)(d+2)}{3}.$

When $d$ is odd, we have

$T=2(1+2+...+d)+2[1+1+2+3+...+(d-1)]+\ldots$

\hskip0.4cm $+2(1+1+2+2+...+\frac{d-1}{2}+\frac{d-1}{2}+\frac{d+1}{2})$

\hskip0.4cm $=d(d+1)+[(d-1)d+2\times 1]+[(d-2)(d-1)+2\times 3]+\ldots$

\hskip0.4cm $+(\frac{d+1}{2})(\frac{d+1}{2}+1)+(\frac{d-1}{2})(\frac{d-1}{2}+1)$

\hskip0.4cm $=1^{2}+2^{2}+3^{2}+...+d^{2}+1+2+3+...+d$

\hskip0.4cm $=\frac{d(d+1)(d+2)}{3}.$

Now $V(G)$ is partitioned into  two parts which are $V(P_{i})$ and $V(G)\setminus V(P_{i})$.
Corresponding to this partition, the quotient matrix of $\mathcal{Q}(G)$ can be written as
 \[B= \begin{pmatrix}
   \frac{\frac{1}{3}d(d+1)(d+2)+s_i}{d+1} & \frac{s_i-\frac{1}{3}d(d+1)(d+2)}{d+1} \\
   \frac{s_i-\frac{1}{3}d(d+1)(d+2)}{n-d-1} & \frac{4W-3s_i+\frac{1}{3}d(d+1)(d+2)}{n-d-1}
\end{pmatrix}.\]

Similar to the proof of Theorem \ref{thm302}, solving $P_{B}(\lambda)=0$ and using Lemma \ref{lem22} get (\ref{eq403}).
\end{proof}

{\bf Remark 4.4}
For $G_{1}$ shown in Fig. 3.1, then by Theorem \ref{thm411}, we have $S_{\mathcal{Q}}(G)\geq 12.1198$. From the Table 4.1, we know that $S_{\mathcal{Q}}(G_{1})\approx 18.6100$.
This shows that Theorem \ref{thm411} is useful to evaluate the distance signless Laplacian spread
of a graph with given diameter.

\begin{cor}
Let $G$ be a simple connected graph with $n$ vertices and diameter $d$.
% $D_{i}$ be the transmission of vertex $v_{i}$ in $G$ for   $1\leq i \leq n$, $W=W(G)$  be the wiener index of $G$.
Suppose that  the path $P_{1}$, $P_{2}$, $\ldots$, $P_{k}$ are all the diameter of $G$,
%$s_i=\sum\limits_{v_{j}\in V(P_{i})}D_{j}$, and
$a_i$, $b_i$ are defined as Theorem \ref{thm411} for $ 1\leq i \leq k$. Then

\noindent {\rm(i)} $q^{\mathcal{D}}(G)\geq \max\limits_{1\leq i\leq k}\{\frac{-a_{i}+\sqrt{a_{i}^{2}-12 b_{i}(d+1)(n-1-d)}}{6(d+1)(n-1-d)}\}$;

\noindent {\rm(ii)} $q^{\mathcal{D}}_{min}(G) \leq \min\limits_{1\leq i\leq k}\{\frac{-a_{i}-\sqrt{a_{i}^{2}-12 b_{i}(d+1)(n-1-d)}}{6(d+1)(n-1-d)}\}$.
\end{cor}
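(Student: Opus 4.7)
The plan is to mirror the interlacing argument used for Theorem \ref{thm411}, but to retain the two individual roots of the $2\times 2$ quotient matrix separately rather than subtracting them to form a spread. Fix a diameter path $P_i$ and partition $V(G)=V(P_i)\cup(V(G)\setminus V(P_i))$. The quotient matrix of $\mathcal{Q}(G)$ with respect to this partition is exactly the matrix $B$ constructed in the proof of Theorem \ref{thm411}:
$$B=\begin{pmatrix}\frac{\frac{1}{3}d(d+1)(d+2)+s_i}{d+1} & \frac{s_i-\frac{1}{3}d(d+1)(d+2)}{d+1}\\[0.5ex] \frac{s_i-\frac{1}{3}d(d+1)(d+2)}{n-d-1} & \frac{4W-3s_i+\frac{1}{3}d(d+1)(d+2)}{n-d-1}\end{pmatrix}.$$

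Next I would solve $P_B(\lambda)=0$, producing the two real roots
$$\lambda_{1,2}(i)=\frac{-a_i\pm\sqrt{a_i^{2}-12b_i(d+1)(n-1-d)}}{6(d+1)(n-1-d)},$$
with $a_i$ and $b_i$ as defined in Theorem \ref{thm411}. Observe that $\lambda_1(i)-\lambda_2(i)$ is precisely the lower bound on $S_{\mathcal{Q}}(G)$ recorded there, so these are the same two numbers computed inside that proof and no fresh algebra is required.

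By Lemma \ref{lem22}, the eigenvalues of $B$ interlace those of $\mathcal{Q}(G)$, hence for each $1\leq i\leq k$ one has
$$q^{\mathcal{D}}(G)\geq \lambda_1(i)\quad\text{and}\quad q^{\mathcal{D}}_{min}(G)\leq \lambda_2(i).$$
Taking the maximum over $i$ in the first inequality yields (i), and taking the minimum over $i$ in the second yields (ii).

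There is essentially no obstacle: the heavy lifting (constructing $B$ and solving $P_B(\lambda)=0$) has already been carried out inside the proof of Theorem \ref{thm411}. The corollary is simply a matter of recording the two interlacing inequalities individually before they are combined into a spread estimate, in direct parallel with how Corollary \ref{cor301} was extracted from Theorem \ref{thm302} and how Corollary \ref{cor409} was extracted from Theorem \ref{thm408}.
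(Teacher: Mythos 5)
Your proposal is correct and matches the paper's (implicit) argument exactly: the corollary is obtained by applying Lemma \ref{lem22} to the same quotient matrix $B$ built in the proof of Theorem \ref{thm411}, recording the two interlacing inequalities $q^{\mathcal{D}}(G)\geq\lambda_1(i)$ and $q^{\mathcal{D}}_{min}(G)\leq\lambda_2(i)$ separately before taking the max and min over $i$. No further comment is needed.
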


%%%%%%%%%%%%%%%%%%%%%%%%%%%%%%%%%%%%%%%%%%%%%%%%%%%%%%%%%%%%%%%%%%%%%%%%%%%%%%%%%%%%%%%%%%%%%%%%%%%%%%%%%%%%%%%%%%%%%%%%%%%%%%%%%%%%%%%%%%%%%%%%%%
\subsection{Bound On $S_{\mathcal{Q}}$ for cacti with given circumference}
\hskip 0.6cm A connected graph $G$ is a $cactus$ if any two of its cycles have at most one common vertex.
$Circumference$ is the length of the longest cycle of a graph.
In this section, we present a lower bound on $S_{\mathcal{Q}}$ of a cactus with given circumference.

\begin{them}\label{thm415}
 Let $G$ be a cactus on $n$ vertices with  circumference  $l\ (l\geq 3)$ and
Wiener index $W$.
Suppose that  cycles $C_{1}, C_{2},...,C_{k}$ are all with length $l$,
$s_i=\sum\limits_{v_{j}\in V(C_{i})}D_{j}$ for $ 1\leq i \leq k$. Then

%\noindent {\rm (1)} if $l$ is even, then
%$S_{\mathcal{Q}}(G)\geq \max\limits_{1\leq i\leq k}{\frac{\sqrt{a_{i}^{2}-16 b_{i}l(n-l)}}{4l(n-l)}}$,
%where $a_{i}=l^{3}n+4ns_{i}-16l(s_{i}-W)$ and $b_{i}=4l^{3}W-16s_{i}(s_{i}-W)$.

%\noindent {\rm (2)} if $l$ is odd, then
%$S_{\mathcal{Q}}(G)\geq \max\limits_{1\leq i\leq k}{\frac{\sqrt{a_{i}^{2}-16 b_{i}l(n-l)}}{4l(n-l)}}$,
%where $a_{i}=l^{3}n+4ns_{i}-ln-16l(s_{i}-W)$ and $b_{i}=4(l^{3}-l)W-16s_{i}(s_{i}-W)$.

\begin{equation}\label{eq404}
S_{\mathcal{Q}}(G)\geq \max\limits_{1\leq i\leq k}{\frac{\sqrt{a_{i}^{2}-16 b_{i}l(n-l)}}{4l(n-l)}},\end{equation}
\noindent where $$a_i=\left\{\begin{array}{cc}
                             l^{3}n+4ns_{i}-16l(s_{i}-W), & \mbox{if } l \mbox{ is even}; \\
                             l^{3}n+4ns_{i}-ln-16l(s_{i}-W), & \mbox{if } l \mbox{ is odd},
                            \end{array}\right.$$
and
$$b_i=\left\{\begin{array}{cc}
                             4l^{3}W-16s_{i}(s_{i}-W), & \mbox{if } l \mbox{ is even}; \\
                             4(l^{3}-l)W-16s_{i}(s_{i}-W), & \mbox{if } l \mbox{ is odd},
                            \end{array}\right.$$
\end{them}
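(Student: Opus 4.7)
The plan is to apply the quotient-matrix/interlacing technique used in the proofs of Theorems \ref{thm408} and \ref{thm411}. Fix $i\in\{1,\ldots,k\}$, partition $V(G)$ into $V(C_i)$ and $V(G)\setminus V(C_i)$, and let $B$ be the $2\times 2$ quotient matrix of $\mathcal{Q}(G)$ under this partition. By Lemma \ref{lem22}, its eigenvalues $\mu_1\geq\mu_2$ satisfy $q^{\mathcal{D}}(G)\geq\mu_1$ and $q^{\mathcal{D}}_{min}(G)\leq\mu_2$, so $S_{\mathcal{Q}}(G)\geq\mu_1-\mu_2=\sqrt{\mathrm{tr}(B)^2-4\det(B)}$; maximizing over $i$ will yield (\ref{eq404}) once the radicand is identified with $(a_i^2-16\,b_i\,l(n-l))/(16\,l^2(n-l)^2)$.

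The only ingredient that is not immediate from $s_i$ and $W$ is the in-block distance sum $T_i:=\sum_{v_s,v_j\in V(C_i)} d_G(v_s,v_j)$. Here the cactus hypothesis is crucial: any two cycles of $G$ share at most one vertex, so $C_i$ is a block of $G$ and every $u,v$-path that leaves $C_i$ must re-enter through the same vertex. Hence $d_G(u,v)=d_{C_i}(u,v)$ for all $u,v\in V(C_i)$, and a direct count along the cycle gives
\[
T_i=\begin{cases}\dfrac{l^3}{4}, & l\text{ even},\\[4pt]\dfrac{l^3-l}{4}, & l\text{ odd}.\end{cases}
\]
Combining this with $\sum_j D_j=2W$, the sum of $\mathcal{D}$-entries from $V(C_i)$ to its complement equals $s_i-T_i$ and the sum within the complement equals $2W-2s_i+T_i$, so
\[
B=\begin{pmatrix}\dfrac{T_i+s_i}{l}&\dfrac{s_i-T_i}{l}\\[6pt]\dfrac{s_i-T_i}{n-l}&\dfrac{4W-3s_i+T_i}{n-l}\end{pmatrix}.
\]

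A short expansion then gives $4\,l(n-l)\,\mathrm{tr}(B)=4nT_i+4ns_i-16l(s_i-W)$ and $16\,l(n-l)\,\det(B)=16WT_i-16s_i(s_i-W)$. Substituting $4T_i=l^3$ in the even case and $4T_i=l^3-l$ in the odd case recovers exactly the $a_i$ and $b_i$ stated in the theorem; then
\[
(\mu_1-\mu_2)^2=\mathrm{tr}(B)^2-4\det(B)=\frac{a_i^2-16\,b_i\,l(n-l)}{16\,l^2(n-l)^2}
\]
and taking square roots produces (\ref{eq404}). The main obstacle is the distance-sum step—first the block argument that forces in-cycle distances in $G$ to equal those in $C_i$, and then the parity bookkeeping that gives the two formulas for $T_i$; once these are handled, the remaining algebra is mechanical and parallels the earlier theorems.
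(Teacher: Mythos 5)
Your proof takes essentially the same route as the paper: partition $V(G)$ into $V(C_i)$ and its complement, form the $2\times2$ quotient matrix of $\mathcal{Q}(G)$, apply the interlacing lemma, and compute the within-cycle distance sum by parity (your block argument showing $d_G(u,v)=d_{C_i}(u,v)$ on the cycle is a point the paper leaves implicit, and you correctly use $\frac{l^2-1}{4}$ per vertex in the odd case where the paper's prose has a typo). The only blemish is a factor-of-$4$ slip in your displayed determinant identity, which should read $4\,l(n-l)\det(B)=16WT_i-16s_i(s_i-W)$; since your final radicand $\mathrm{tr}(B)^2-4\det(B)=\bigl(a_i^2-16\,b_i\,l(n-l)\bigr)/\bigl(16\,l^2(n-l)^2\bigr)$ is nonetheless correct, nothing is affected.
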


\begin{proof}
Corresponding to $C_{i}$, $V(G)$ is partitioned into  two parts $V(C_{i})$ and $V(G)\setminus V(C_{i})$.

\noindent{\bf Case 1: }$l$ is even.

Then for any $v\in V(C_{i})$, the sum of distance from vertex $v$ to all other vertices on cycle $V(C_{i})$ is $\frac{l^{2}}{4}$.
 Corresponding to the above partition, the quotient matrix of $\mathcal{Q}(G)$ is written as
\[B= \begin{pmatrix}
   \frac{l^{2}}{4}+\frac{S_{i}}{l} & \frac{S_{i}}{l}-\frac{l^{2}}{4} \\
  \frac{S_{i}-\frac{l^{3}}{4}}{n-l} & \frac{4W-3S_{i}+\frac{l^{3}}{4}}{n-l}
\end{pmatrix}.\]

Similar to the proof of Theorem \ref{thm302}, solving $P_{B}(\lambda)=0$ and using Lemma \ref{lem22} get (\ref{eq402}).
 %we have $S_{\mathcal{Q}}(G)\geq \max\limits_{1\leq i\leq k}{\frac{\sqrt{a_{i}^{2}-16 b_{i}l(n-l)}}{4l(n-l)}}$.

\noindent{\bf Case 2: }$l$ is odd.

Then for any $v\in V(C_{i})$, the sum of distance from vertex $v$ to all other vertices on cycle $V(C_{i})$ is $\frac{l^{2}-l}{4}$.
 Corresponding to the above partition, the quotient matrix of $\mathcal{Q}(G)$ is written as
\[B= \begin{pmatrix}
   \frac{l^{2}-1}{4}+\frac{S_{i}}{l} & \frac{S_{i}}{l}-\frac{l^{2}-1}{4} \\
  \frac{S_{i}-\frac{l^{3}-l}{4}}{n-l} & \frac{4W-3S_{i}+\frac{l^{3}-l}{4}}{n-l}
\end{pmatrix}.\]

Similar to the proof of Theorem \ref{thm302}, solving $P_{B}(\lambda)=0$ and using Lemma \ref{lem22} get (\ref{eq402}).
\end{proof}
\vskip-.3cm
\setlength{\unitlength} {4mm}
\begin{center}
\begin{picture}(25,6.5)
\put(3,3){\circle* {0.1}}   \put(6,3){\circle* {0.1}}     \put(3,6){\circle* {0.1}}     \put(6,6){\circle* {0.1}}     \put(8,2){\circle* {0.1}}     \put(8,5){\circle* {0.1}}
\put(2,2.7){$v_{4}$}          \put(5.4,2.5){$v_{3}$}         \put(2,5.7){$v_{1}$}       \put(6.3,5.7){$v_{2}$}          \put(8.3,4.7){$v_{5}$}       \put(8.3,1.7){$v_{6}$}
\put(3,3){\line(0,1){3}}    \put(3,3){\line(1,0){3}}      \put(3,6){\line(1,0){3}}      \put(6,3){\line(0,1){3}}       \put(6,3){\line(1,1){2}}      \put(6,3){\line(2,-1){2}}    \put(8,2){\line(0,1){3}}
\put(4,1){ $G_3$}

\put(12,3){\circle* {0.1}}   \put(15,3){\circle* {0.1}}     \put(12,5){\circle* {0.1}}     \put(15,5){\circle* {0.1}}     \put(14,6){\circle* {0.1}}     \put(17,6){\circle* {0.1}}    \put(17,3){\circle* {0.1}}
\put(11,2.7){$v_{5}$}          \put(14.5,2){$v_{4}$}         \put(11,4.7){$v_{1}$}       \put(13,6.5){$v_{2}$}          \put(14,4.7){$v_{3}$}       \put(16,6.3){$v_{6}$}          \put(16,2.7){$v_{7}$}
\put(12,3){\line(0,1){2}}    \put(12,3){\line(1,0){3}}      \put(12,5){\line(2,1){2}}      \put(15,5){\line(0,-1){2}}       \put(15,5){\line(-1,1){1}}      \put(15,5){\line(2,1){2}}    \put(15,5){\line(1,-1){2}}      \put(17,3){\line(0,1){3}}
\put(13,1){ $G_4$} \put(6,-1){ Fig. 4.2. $G_4$, $G_4$}
\end{picture}
\end{center}
\hskip 0.1cm

{\bf Remark 4.4}
 Let $G_3$, $G_4$ are as shown in Fig. 4.2. By Theorem \ref{thm415}, we have $s_{\mathcal{Q}}(G_3)\geq 11.5$.
On the other hand, by direct calculation,  we have   $S_{\mathcal{Q}}(G_3)\approx 12.8$.
By Theorem \ref{thm415}, we have $S_{\mathcal{Q}}(G_4)\geq 13.4$.
On the other hand, by direct calculation, we have $S_{\mathcal{Q}}(G_4)\approx 16.3$.
These two examples show that Theorem \ref{thm415} is useful to evaluate the $S_{\mathcal{Q}}$ of the cacti
with given circumference.

\begin{cor}
 Let $G$ be a cactus on $n$ vertices with given circumference  $l\geq 3$.
 %$D_{i}$ be the transmission of vertex $v_{i}$ in $G$ for   $1\leq i \leq n$, $W=W(G)$  be the wiener index of $G$.
Suppose that cycles $C_{1}, C_{2},...,C_{k}$ are all with length $l$,
%$s_i=\sum\limits_{v_{j}\in V(C_{i})}D_{j}$ for $ 1\leq i \leq k$. Then
$a_i,\ b_i$ are defined as Theorem \ref{thm415} for $ 1\leq i \leq k$. Then

\noindent{\rm (1)} $q^{\mathcal{D}}(G)\geq \max\limits_{1\leq i\leq k}\{\frac{-a_{i}+\sqrt{a_{i}^{2}-16 b_{i}l(n-l)}}{8l(n-l)}\}$;

\noindent{\rm (2)} $q^{\mathcal{D}}_{min}(G) \leq \min\limits_{1\leq i\leq k}\{\frac{-a_{i}-\sqrt{a_{i}^{2}-16 b_{i}l(n-l)}}{8l(n-l)}\}$.
\end{cor}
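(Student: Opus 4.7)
The corollary is a direct readout of the individual eigenvalues of the quotient matrix $B$ that already appeared in the proof of Theorem \ref{thm415}, rather than just their difference. So the plan is to reuse that construction and invoke Lemma \ref{lem22} for each eigenvalue separately instead of only for the spread.

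Concretely, for every cycle $C_i$ of length $l$, consider the partition of $V(G)$ into $V(C_i)$ and $V(G)\setminus V(C_i)$. As in the proof of Theorem \ref{thm415}, split into the parity cases $l$ even and $l$ odd, in each case computing the sum of distances from a cycle vertex to the other cycle vertices (which is $l^2/4$ or $(l^2-1)/4$), and write down the corresponding $2\times 2$ quotient matrix $B$ of $\mathcal{Q}(G)$. Solving $P_B(\lambda)=0$ yields exactly the two roots
\[
\mu_i^{\pm} \;=\; \frac{-a_i \pm \sqrt{a_i^2-16 b_i l(n-l)}}{8l(n-l)},
\]
with $a_i,b_i$ as defined in Theorem \ref{thm415}; this is the same discriminant whose square root already produced the spread bound (\ref{eq404}), just split into its two individual roots.

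Now apply Lemma \ref{lem22}: the eigenvalues of $B$ interlace those of $\mathcal{Q}(G)$, so if the eigenvalues of $\mathcal{Q}(G)$ are $q^{\mathcal{D}}(G)=\lambda_1\geq\lambda_2\geq\cdots\geq\lambda_n=q_{min}^{\mathcal{D}}(G)$, then $\lambda_1\geq \mu_i^{+}$ and $\lambda_n\leq \mu_i^{-}$ for each $i$. Taking the maximum over $i$ of the first inequality yields part (1), and taking the minimum over $i$ of the second yields part (2).

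The whole argument is a one-line consequence of computations already done in Theorem \ref{thm415}, so there is no real obstacle; the only minor bookkeeping is to make sure the assignment of $\mu_i^{+}$ to the larger root and $\mu_i^{-}$ to the smaller root is unambiguous, i.e., that $a_i^2-16 b_i l(n-l)\geq 0$. This nonnegativity is exactly what made the bound (\ref{eq404}) in Theorem \ref{thm415} meaningful, so it is automatic here.
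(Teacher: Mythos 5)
Your proof is correct in structure and is exactly what the paper intends: the corollary is an immediate readout of the two individual roots of the $2\times 2$ quotient matrix constructed in the proof of Theorem \ref{thm415}, combined with the interlacing Lemma \ref{lem22} applied to $\lambda_1$ and $\lambda_n$ separately rather than to their difference, which is all the paper itself does (it states the corollary without further argument). One caveat, which lives in the paper's own statement rather than in your reasoning: the $a_i$ defined in Theorem \ref{thm415} satisfies $a_i=+4l(n-l)\,\mathrm{tr}(B)>0$ (the opposite sign convention from the clique and diameter sections), so the roots of $P_B(\lambda)=0$ are literally $\frac{a_i\pm\sqrt{a_i^2-16b_il(n-l)}}{8l(n-l)}$; your formula with $-a_i$ in the numerator reproduces the corollary as printed but does not match a direct computation with the theorem's $a_i$, so strictly speaking either the theorem's $a_i$ or the corollary's signs need to be flipped for the two to be consistent.
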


%Similarly, we propose  another problem for further research as follows.

\end{document}